\numberwithin{equation}{section}
\newtheorem{theorem}{Theorem}[section]
\newtheorem{lemma}[theorem]{Lemma}
\newtheorem{remark}[theorem]{Remark}
\newtheorem{definition}[theorem]{Definition}
\newtheorem{proposition}[theorem]{Proposition}
\newtheorem{corollary}[theorem]{Corollary}
\newcommand{\R}{\mathbb R}
\newcommand{\vareps}{\varepsilon}
\newcommand{\eps}{\epsilon}
\DeclareMathOperator*{\loc}{loc}
\DeclareMathOperator*{\rad}{rad}
\DeclareMathOperator*{\opt}{opt}
\DeclareMathOperator*{\supp}{supp}
\DeclareMathOperator*{\gamc}{{\gamma_c}}
\DeclareMathOperator*{\sigc}{{\sigma_c}}
\DeclareMathOperator*{\ima}{Im}
\DeclareMathOperator*{\rea}{Re}
\newcommand{\scal}[1]{\left\langle #1 \right\rangle}
\title[Energy scattering 2D INLS]
{Energy scattering for a class of inhomogeneous nonlinear Schr\"odinger equation in two dimensions}
\author[V. D. Dinh]{Van Duong Dinh}
\address[V. D. Dinh]{Laboratoire Paul Painlev\'e UMR 8524, Universit\'e de Lille CNRS, 59655 Villeneuve d'Ascq Cedex, France
and 
Department of Mathematics, HCMC University of Pedagogy, 280 An Duong Vuong, Ho Chi Minh, Vietnam}
\email{contact@duongdinh.com}
\subjclass[2010]{35Q44; 35Q55}
\keywords{Inhomogeneous nonlinear Schr\"odinger equation, Scattering, Ground state, Radial Sobolev embedding}
\begin{document}
	
	\begin{abstract}
	We consider a class of $L^2$-supercritical inhomogeneous nonlinear Schr\"odinger equations in two dimensions
	\[
	i\partial_t u + \Delta u = \pm |x|^{-b} |u|^\alpha u, \quad (t,x) \in \R \times \R^2,
	\]
	where $0<b<1$ and $\alpha>2-b$. By adapting a new approach of Arora-Dodson-Murphy \cite{ADM}, we show the energy scattering for the equation with radially symmetric initial data. In the focusing case, our result extends the one of Farah-Guzm\'an \cite{FG-high} to the whole range of $b$ where the local well-posedness is available. In the defocusing case, our result extends the one in \cite{Dinh-scat} where the energy scattering for non-radial initial data was established in dimensions $N\geq 3$.
	\end{abstract}

	\maketitle

	\section{Introduction}
	\label{S1}
	\setcounter{equation}{0}
	We consider the Cauchy problem for the inhomogeneous nonlinear Schr\"odinger equations
	\begin{equation} \label{INLS}
		\left\{ 
		\begin{array}{rcl}
			i\partial_t u + \Delta u &=& \pm |x|^{-b} |u|^\alpha u, \quad (t,x) \in \R \times \R^N, \\
			u(0)&=& u_0,
		\end{array}
		\right.
	\end{equation}
	where $u: \mathbb{R} \times \mathbb{R}^N \rightarrow \mathbb{C}$, $u_0: \mathbb{R}^N \rightarrow \mathbb{C}$, $b>0$ and $\alpha>0$. The plus (resp. minus) sign in front of the nonlinearity corresponds to the defocusing (resp. focusing) case. The inhomogeneous nonlinear Schr\"odinger equation arises in nonlinear optics for the propagation of laser beams. The beam propagation can be modeled by the equation of the form
	\begin{align} \label{model-equ}
	i\partial_t u + \Delta u + K(x) |u|^\alpha u=0.
	\end{align}
	The equation \eqref{model-equ} has been attracted much attention recently. Berg\'e \cite{Berge} studied formally the stability condition for solition solutions of \eqref{model-equ}. Towers-Malomed \cite{TM} observed by means of variational approximation and direct simulations that a certain type of time-dependent nonlinear medium gives rise to completely stable beams. Merle \cite{Merle} and Rapha\"el-Szeftel \cite{RS} studied the existence and non-existence of minimal mass blow-up solutions  for \eqref{model-equ} with $k_1 < K(x) <k_2$ and $k_1, k_2>0$. Fibich-Wang \cite{FW} investigated the stability of solitary waves for \eqref{model-equ} with $K(x)= K(\eps |x|)$, where $\eps>0$ is a small parameter and $K \in C^4(\R^N) \cap L^\infty(\R^N)$. The case $K(x) = |x|^b$ with $b>0$ was studied in \cite{Chen, CG, LWW, Zhu}.
	
	Before reviewing some known results for \eqref{INLS}, let us recall some properties of \eqref{INLS}. The equation \eqref{INLS} is invariant under the scaling
	\[
	u_\lambda(t,x):= \lambda^{\frac{2-b}{\alpha}} u(\lambda^2 t, \lambda x), \quad \lambda>0.
	\]
	An easy computation shows 
	\[
	\|u_\lambda(0)\|_{\dot{H}^\gamma} = \lambda^{\gamma-\frac{N}{2}+\frac{2-b}{\alpha}} \|u_0\|_{\dot{H}^\gamma}.
	\]
	We thus denote the critical exponents
	\[
	\gamc := \frac{N}{2} - \frac{2-b}{\alpha}
	\]
	and 
	\begin{align} \label{def-sigc}
	\sigc:= \frac{1-\gamc}{\gamc} = \frac{4-2b-(N-2)\alpha}{N\alpha-4+2b}.
	\end{align}
	The equation \eqref{INLS} has formally the conservation of mass and energy
	\begin{align*}
	M(u(t)) &= \int |u(t,x)|^2 dx = M(u_0), \tag{\text{Mass}} \\
	E(u(t)) &= \frac{1}{2} \int |\nabla u(t,x)|^2 dx \pm \frac{1}{\alpha+2} \int |x|^{-b} |u(t,x)|^{\alpha+2} dx = E(u_0). \tag{\text{Energy}}
	\end{align*}
	The well-posedness for \eqref{INLS} with initial data in $H^1$ was first studied by Genoud-Stuart \cite{GS} by using an abstract theory of Cazenave which does not use Strichartz estimates. More precisely, they proved that the focusing problem \eqref{INLS} with $0<b<\min\{2,N\}$ is well posed in $H^1$:
	\begin{itemize}
		\item locally if $0<\alpha<2^*$,
		\item globally for any initial data if $0<\alpha<2_*$,
		\item globally for small initial data if $2_* \leq \alpha<2^*$,
	\end{itemize}
	where
	\begin{align} \label{def-2-star}
	2^*:= \left\{
	\begin{array}{cl}
	\frac{4-2b}{N-2} & \text{if } N\geq 3, \\
	\infty &\text{if } N=1,2,
	\end{array}
	\right.
	\quad
	2_*:= \frac{4-2b}{N}.	
	\end{align}
	
	Guzm\'an \cite{Guzman} and Dinh \cite{Dinh-weigh} later used Strichartz estimates and the contraction mapping argument to show the local well-posedness for \eqref{INLS}. They proved that if
	\begin{align*}
	\renewcommand*{\arraystretch}{1.3}
	\left\{
	\begin{array}{l}
	N\geq 4, \quad 0<b<2, \quad 0<\alpha<2^*, \\
	N=3, \quad 0<b<1, \quad 0<\alpha<2^*, \\
	N=3, \quad 1 \leq b <\frac{3}{2}, \quad 0<\alpha<\frac{6-4b}{2b-1}, \\
	N=2, \quad 0<b<1, \quad 0<\alpha<2^*,
	\end{array}
	\right.
	\end{align*}
	then \eqref{INLS} is locally well-posed in $H^1$. Moroever, the local solution satisfies $u \in L^q_{\loc}((-T_*, T^*), W^{1,r})$ for any Schr\"odinger admissible pair $(q,r)$, where $(-T_*, T^*)$ is the maximal time of existence. Note that the results of Guzm\'an and Dinh are weaker than the ones of Genoud and Stuart. It does not treat the case $N=1$ and there are restrictions on the validity of $b$ when $N=2$ and $N=3$. However, it shows that the solution belongs locally in Strichartz spaces $L^q((-T_*, T^*), W^{1,r})$.  This property plays a crucial role in the scattering theory. 
	
	In the case $\alpha=2_*$, Genoud \cite{Genoud} showed that the focusing problem \eqref{INLS} with $0<b<\min \{2,N\}$ is globally well-posed in $H^1$ by assuming $u_0 \in H^1$ and $\|u_0\|_{L^2} <\|Q\|_{L^2}$, where $Q$ is the unique positive radially symmetric and decreasing solution to the elliptic equation
	\begin{align} \label{ell-equ}
	\Delta Q-Q+|x|^{-b} |Q|^\alpha Q=0.
	\end{align}
	Combet-Genoud \cite{CG} later established the classification of minimal mass blow-up solutions to the focusing problem \eqref{INLS}. Note that the uniqueness of positive radial solution to \eqref{ell-equ} was established by Yanagida \cite{Yanagida} and Genoud \cite{Genoud-2d}. Their results hold under the assumptions $0<b<\min \{2,N\}$ and $0<\alpha<2^*$. 
	
	In the case $2_*<\alpha<2^*$, Farah \cite{Farah} proved that the focusing problem \eqref{INLS} with $0<b<\min \{2,N\}$ is globally well-posed in $H^1$ provided that $u_0 \in H^1$ and satisfies
	\begin{align} \label{con-energy}
	E(u_0) [M(u_0)]^{\sigc} < E(Q) [M(Q)]^{\sigc}
	\end{align}
	and
	\begin{align} \label{con-grad}
	\|\nabla u_0\|_{L^2} \|u_0\|_{L^2}^{\sigc} < \|\nabla Q\|_{L^2} \|Q\|^{\sigc}_{L^2},
	\end{align}
	where $\sigc$ is as in \eqref{def-sigc}. The existence of finite time blow-up solutions for the focusing problem \eqref{INLS} was studied by Farah \cite{Farah} and Dinh \cite{Dinh-blow}. 
	
	The energy scattering for the focusing problem \eqref{INLS} was first established by Farah-Guzm\'an \cite{FG-3D} with $0<b<1$, $\alpha=2$ and $N=3$. The proof is based on the concentration-compactness argument developed by Kenig-Merle \cite{KM}. This result was later extended to higher dimensions in \cite{FG-high} using again the concentration-compactness argument. Recently, Campos \cite{Campos} used a new method of Dodson-Murphy \cite{DM} to give an alternative simple proof for the results of Farah-Guzm\'an. He also extends the validity of $b$ in dimensions $N\geq 3$. More precisely, their results read as follows.
	
	\begin{theorem}[\cite{FG-high,Campos}] 
		Let $N\geq 3$, $0<b<2$ and $2_*<\alpha<2^*$. Let $u_0 \in H^1$ be radially symmetric and satisfy \eqref{con-energy} and \eqref{con-grad}. Then the corresponding solution to the focusing problem \eqref{INLS} exists globally in time and scatters in both time directions, i.e. there exist $u_0^\pm \in H^1$ such that
		\[
		\lim_{t\rightarrow \pm \infty} \|u(t) - e^{it\Delta} u_0^\pm \|_{H^1} =0.
		\]
	\end{theorem}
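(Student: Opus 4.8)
The plan is to follow the Dodson--Murphy strategy used by Campos \cite{Campos}, organizing the argument into a variational part that produces uniform bounds and coercivity, and a dynamical part that upgrades these to scattering through a Morawetz estimate and a scattering criterion. Global existence and the uniform bound $\sup_{t\in\R}\|u(t)\|_{H^1}\lesssim 1$ are provided by the variational analysis of Farah \cite{Farah}: the sharp Gagliardo--Nirenberg inequality adapted to the weighted nonlinearity $\int|x|^{-b}|u|^{\alpha+2}\,dx$, whose optimizers are rescalings of the ground state $Q$ from \eqref{ell-equ}, shows that the open condition \eqref{con-grad} is preserved along the flow whenever \eqref{con-energy} holds, and a continuity argument closes global existence. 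The quantitative consequence I would extract is \emph{energy trapping}: the same variational inequalities furnish $\delta>0$ such that the virial functional
\[
K(u(t)):=\|\nabla u(t)\|_{L^2}^2-\frac{N\alpha+2b}{2(\alpha+2)}\int|x|^{-b}|u(t,x)|^{\alpha+2}\,dx
\]
obeys $K(u(t))\ge\delta\,\|\nabla u(t)\|_{L^2}^2$ uniformly in $t$. This is the coercivity that drives the next step.

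For the dynamical part I would prove a truncated virial/Morawetz estimate using a radial weight $a(x)$ that behaves like $|x|^2$ near the origin and grows linearly for $|x|$ large, cut off at a scale $R$. Differentiating the Morawetz action $M_R(t)=2\,\ima\int\nabla a\cdot\nabla u\,\bar u\,dx$ twice along \eqref{INLS} and inserting the lower bound $K(u(t))\ge\delta\|\nabla u(t)\|_{L^2}^2$ yields, on any interval $I$, control of $\int_I\int_{|x|\le R}|x|^{-b}|u|^{\alpha+2}\,dx\,dt$ by $R\,\sup_t\|u(t)\|_{H^1}^2$ plus tail terms. The radial Sobolev (Strauss) embedding $|u(t,x)|\lesssim|x|^{-\frac{N-1}{2}}\|u(t)\|_{L^2}^{1/2}\|\nabla u(t)\|_{L^2}^{1/2}$ is the crucial tool for the tails: together with the decay of $|x|^{-b}$ it makes the contribution of $|x|>R$ controllably small. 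Letting $|I|\to\infty$ and optimizing in $R$ forces
\[
\int_\R\int_{|x|\le R}|x|^{-b}|u(t,x)|^{\alpha+2}\,dx\,dt<\infty,
\]
so the time-tail $\int_T^\infty\int_{|x|\le R}(\cdots)\,dx\,dt$ tends to $0$ as $T\to\infty$.

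Finally I would invoke the Dodson--Murphy scattering criterion: a global, uniformly $H^1$-bounded solution whose localized nonlinear interaction has vanishing time-tails, as above, must have finite scattering norm. This implication is perturbative, using the Strichartz estimates, the weighted fractional estimates underlying the local theory, and the local-in-Strichartz property $u\in L^q_{\loc}(W^{1,r})$ recalled in the introduction; once the scattering norm is finite, the standard Duhamel argument shows that $e^{-it\Delta}u(t)$ is Cauchy in $H^1$ as $t\to\pm\infty$, producing the asymptotic states $u_0^\pm$.

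The main obstacle I anticipate is closing the Morawetz and scattering-criterion steps in the presence of the singular, decaying weight $|x|^{-b}$, which pulls in two directions at once. Near the origin one must absorb the singularity of $|x|^{-b}$, where the admissible range of $b$ and Hardy-type inequalities enter, while at spatial infinity one must manufacture enough decay to dominate the error terms of the truncated virial identity, which is precisely where radial symmetry and the Strauss embedding become indispensable. Matching these two regimes uniformly in the truncation scale $R$, and balancing the resulting errors against the coercivity constant $\delta$, is the technical heart of the argument.
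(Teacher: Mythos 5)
Your overall strategy (variational trapping from Farah, truncated virial/Morawetz with the Strauss embedding, then a perturbative scattering criterion) is indeed the Dodson--Murphy/Campos route that this theorem's proof follows, and that the present paper adapts to 2D. But there are two gaps, one of which is serious. First, the technical one: you cannot insert the global coercivity $K(u(t))\ge\delta\|\nabla u(t)\|_{L^2}^2$ into the truncated virial identity, because differentiating $M_R(t)$ produces the \emph{localized} quantity
\[
\int_{|x|\le R}|\nabla u(t)|^2\,dx-\frac{N\alpha+2b}{2(\alpha+2)}\int_{|x|\le R}|x|^{-b}|u(t)|^{\alpha+2}\,dx,
\]
which does not inherit a sign from $K(u(t))$. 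One must first show that the truncated function $\chi_R u$ itself stays below the ground-state threshold (this is Lemma \ref{lem-coer-2}, Campos's key lemma), apply coercivity to $\chi_R u$, and control the discrepancy between the localized quantities and those of $\chi_R u$ by $O(R^{-2}+R^{-\frac{(N-1)\alpha}{2}-b})$ via the radial Sobolev embedding. This step is fixable but is precisely the point where the argument needs care.

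The serious gap is your conclusion from the Morawetz step. The truncated virial gives, for each fixed $R$, a bound of the form $\delta\int_0^T\int|x|^{-b}|\chi_R u|^{\alpha+2}\,dx\,dt\lesssim R+T\left(R^{-2}+R^{-\frac{(N-1)\alpha}{2}-b}\right)$: the error terms are only \emph{bounded per unit time}, not integrable in time, so for fixed $R$ the right-hand side grows linearly in $T$. Optimizing $R\sim T^{\beta_1}$ yields only the sublinear bound $C\,T^{\beta_1}$ with $0<\beta_1<1$ (cf.\ \eqref{mora-est}); it does \emph{not} yield $\int_\R\int_{|x|\le R}|x|^{-b}|u|^{\alpha+2}\,dx\,dt<\infty$, and hence the "vanishing time-tails" you feed into your scattering criterion do not follow. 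Indeed, if such a finite global space-time bound were available, scattering would follow by a much softer argument, and the entire ADM/Campos machinery would be unnecessary. The actual proof must work with the weaker, merely sublinear information: one pigeonholes using the interval bound \eqref{mora-est-I} to find a good time $t_0$ (and interval $[t_0,t_1]$) where the nonlinear interaction is $O(\vareps)$, controls the Duhamel contribution of $[0,t_0]$ on $[t_1,\infty)$ by the dispersive estimate combined with the $T^{\beta_1}$ bound, and closes the continuity argument only because $\beta_1+\beta_2<1$. This quantitative balancing of the two Morawetz exponents is the heart of the proof and is absent from your proposal.
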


	In the case $N=2$, we also have the following energy scattering for the focusing problem \eqref{INLS} due to Farah-Guzm\'an \cite{FG-high}.
	
	\begin{theorem}[\cite{FG-high}] 
		Let $N=2$, $0<b<\frac{2}{3}$ and $\alpha>2-b$. Let $u_0 \in H^1$ be radially symmetric and satisfy \eqref{con-energy} and \eqref{con-grad}. Then the corresponding solution to the focusing problem \eqref{INLS} exists globally in time and scatters in both time directions.
	\end{theorem}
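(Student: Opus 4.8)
The plan is to avoid the concentration--compactness scheme of \cite{FG-high} and to argue instead along the virial/Morawetz route of Dodson--Murphy and Arora--Dodson--Murphy \cite{DM,ADM}, which is the method adopted throughout this paper. The argument rests on three ingredients: a coercivity coming from the sub-threshold hypotheses \eqref{con-energy}--\eqref{con-grad}, a scattering criterion phrased through the mass of $u$ on balls, and a truncated virial estimate that verifies this criterion. Radial symmetry enters only in the last two steps, through the two-dimensional Strauss inequality.

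First I would record the variational input. The sharp Gagliardo--Nirenberg inequality associated with \eqref{INLS},
\[
\int |x|^{-b}|f|^{\alpha+2}\,dx \le C_{\mathrm{GN}}\,\|\nabla f\|_{L^2}^{\frac{N\alpha+2b}{2}}\,\|f\|_{L^2}^{\frac{4-2b-(N-2)\alpha}{2}},
\]
whose optimiser is the ground state $Q$ solving \eqref{ell-equ}, combined with the conservation of mass and energy, gives (as in Farah \cite{Farah}) a global solution with $\sup_t\|u(t)\|_{H^1}\le A<\infty$ and shows that the sub-threshold region cut out by \eqref{con-energy}--\eqref{con-grad} is invariant. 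The quantitative consequence I would extract is the uniform coercivity of the focusing virial functional: there is $\delta>0$ with
\[
8\|\nabla u(t)\|_{L^2}^2 - \frac{4(N\alpha+2b)}{\alpha+2}\int |x|^{-b}|u(t)|^{\alpha+2}\,dx \ge \delta\int |x|^{-b}|u(t)|^{\alpha+2}\,dx
\]
for every $t$, so that the virial is not merely positive but dominates the potential energy.

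Next I would set up the scattering criterion in the form of \cite{DM}: there exist $R,\epsilon>0$, depending only on $A$, such that if
\[
\liminf_{t\rightarrow+\infty}\int_{|x|\le R}|u(t,x)|^2\,dx \le \epsilon^2,
\]
then $u$ scatters as $t\to+\infty$; the backward statement follows from the conjugation symmetry $u(t,x)\mapsto\overline{u(-t,x)}$. I would prove the criterion by a Duhamel/Strichartz bootstrap started at a large time where the mass on $\{|x|\le R\}$ is small: the incoming linear evolution is split into a low-mass part, small in the relevant Strichartz norm after interpolating the mass smallness against the $H^1$ bound, and a far tail, small by the dispersive decay of $e^{it\Delta}$, after which the local-in-time Strichartz regularity $u\in L^q_{\loc}(W^{1,r})$ furnished by the local theory closes a perturbative bound for the nonlinear Duhamel term. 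The nonlinear estimates here must absorb the singular factor $|x|^{-b}$ in $H^1$, and the range of $(b,\alpha)$ for which they are available is exactly the local well-posedness range; in $N=2$ this is $0<b<1$.

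The substance of the proof, and the step I expect to be the real obstacle, is to verify the hypothesis of the criterion by a truncated virial computation. I would take $M_R(t)=2\,\ima\int\nabla\phi_R\cdot\bar u\,\nabla u\,dx$ with $\phi_R(x)=R^2\phi(x/R)$ radial, convex, equal to $|x|^2/2$ on $\{|x|\le R\}$ and with $\nabla\phi_R$ bounded by $O(R)$, so that $|M_R(t)|\lesssim R\,\|u\|_{L^2}\|\nabla u\|_{L^2}\lesssim_A R$ uniformly in time. Differentiating, the interior region $\{|x|\le R\}$, where $\phi_R(x)=|x|^2/2$, reproduces the full virial functional; localising the Gagliardo--Nirenberg inequality to this ball and using that $u(t)$ stays strictly below the ground-state threshold, the interior contribution is bounded below by a positive multiple of $\int_{|x|\le R}|x|^{-b}|u|^{\alpha+2}\,dx$. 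The kinetic Morawetz term is nonnegative (as $\phi_R$ is convex) and creates no difficulty, while the genuine errors --- the term $\int\Delta^2\phi_R|u|^2=O(R^{-2})$, the nonlinear tail, and the defect from localising --- live in $\{|x|>R\}$ and are estimated by the two-dimensional Strauss bound $|u(x)|\lesssim|x|^{-1/2}\|u\|_{L^2}^{1/2}\|\nabla u\|_{L^2}^{1/2}$, which yields for instance $\int_{|x|>R}|x|^{-b}|u|^{\alpha+2}\,dx\lesssim_A R^{-\frac{\alpha}{2}-b}$. Integrating $\frac{d}{dt}M_R$ over $[0,T]$, dividing by $T$, and invoking $|M_R|\lesssim_A R$ then forces the time-average of $\int_{|x|\le R}|x|^{-b}|u|^{\alpha+2}\,dx$ to be $O(R/T)$ plus an $R$-small error; letting $T\to\infty$ and then $R\to\infty$ produces a sequence $t_n\to+\infty$ along which the interior potential energy, hence (via the localised coercivity and mass conservation) the mass $\int_{|x|\le R}|u(t_n)|^2\,dx$, is as small as required. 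The hard point is that in $N=2$ the Strauss decay $|x|^{-1/2}$ is the weakest available, so localising the coercivity and closing the exterior estimates demands a careful accounting of exponents using $\alpha>2-b$; it is the original concentration--compactness method, not the virial scheme, that confines \cite{FG-high} to $0<b<\tfrac{2}{3}$, and the argument sketched here is what the paper pushes to the full local well-posedness range $0<b<1$.
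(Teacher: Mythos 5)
Your proposal follows the Dodson--Murphy/Campos scheme (coercivity, then a mass-concentration scattering criterion, then a truncated virial argument to verify it), which is genuinely different from the route this paper takes; but it contains a gap that is fatal in two dimensions, and it sits exactly at the step you pass over most quickly. The virial half of your plan is sound --- it is essentially Proposition~\ref{prop-mora-est} before the radius $R$ is optimized against $T$. The problem is the scattering criterion itself: your proof of it relies on making ``the far tail'' of the Duhamel integral ``small by the dispersive decay of $e^{it\Delta}$''. In dimensions $N\geq 3$ this works because the dispersive kernel decays like $|t-s|^{-N/2}$, which is integrable at infinity, so the contribution of the distant past $\int_0^{t_0-\epsilon^{-\theta}} e^{i(t-s)\Delta}\,(\cdots)\,ds$ is $O(\epsilon^{\theta(N-2)/2})$ in $L^\infty_x$ and can be absorbed. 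In $N=2$ the decay is $|t-s|^{-1}$, and $\int_0^{t_0-\epsilon^{-\theta}} |t-s|^{-1}\,ds$ diverges logarithmically in $t_0$, uniformly in the cut-off; no choice of $R$, $\epsilon$, $\theta$ makes this term small. This is precisely the obstruction that forced Arora--Dodson--Murphy to modify the method in 2D, and it is why Campos's treatment of the INLS by the criterion method is restricted to $N\geq 3$; it is not merely an exponent-accounting issue in the Strauss inequality, as your last paragraph suggests.

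For comparison, the paper never proves a mass-concentration criterion. Instead, it upgrades the virial computation to the quantitative spacetime bounds \eqref{mora-est} and \eqref{mora-est-I}, which grow \emph{sublinearly} in $T$ and $|I|$ (by choosing $R\sim T^{\beta_1}$), and then, in the proof of Proposition~\ref{prop-glo-Lp-bou}, the far-past Duhamel term is handled by Cauchy--Schwarz \emph{in time}:
\[
\Bigl( \int_0^{t_0} |t-s|^{-2}\,ds \Bigr)^{\frac{1}{2}} \Bigl( \int_0^{t_0} \int |x|^{-b}|u|^{\alpha+2}\,dx\,ds \Bigr)^{\frac{1}{2}} \lesssim |t-t_0|^{-\frac{1}{2}}\, T^{\frac{\beta_1}{2}} \lesssim \bigl(\vareps T^{1-\beta_1-\beta_2}\bigr)^{-\frac{1}{2}}
\]
for $t\geq t_1 = t_0+\vareps T^{1-\beta_2}$, which is small for $T$ large precisely because $\beta_1+\beta_2<1$ when $0<b<1$ and $\alpha>2-b$. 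In other words, the non-integrability of $|t-s|^{-1}$ is compensated by the sublinear growth of the Morawetz bound, not by kernel integrability, and pointwise-in-time smallness of the mass on a ball is never needed: it is replaced by smallness of the potential energy averaged over a well-chosen interval $[t_0,t_1]$. To salvage your argument you would have to replace the criterion step by this mechanism (or an equivalent one); as written, the proposal does not close in dimension two.
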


	The main purpose of this paper is to give an alternative simple proof for the result of Farah-Guzm\'an in two dimensions. More precisely, our main result is the following.
	
	\begin{theorem} \label{theo-scat-2D-focus}
		Let $N=2$, $0<b<1$ and $\alpha>2-b$. Let $u_0 \in H^1$ be radially symmetric and satisfy \eqref{con-energy} and \eqref{con-grad}. Then the corresponding solution to the focusing problem \eqref{INLS} exists globally in time and scatters in both time directions.
	\end{theorem}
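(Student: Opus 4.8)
The plan is to implement the Arora--Dodson--Murphy strategy \cite{ADM}, which replaces the concentration--compactness machinery of Kenig--Merle \cite{KM} by the combination of an energy-trapping (coercivity) property, a localized virial/Morawetz estimate, and a scattering criterion. \textbf{Step 1 (Uniform bounds).} First I would note that, under \eqref{con-energy}--\eqref{con-grad}, Farah's global well-posedness result \cite{Farah} applies: conservation of mass and energy together with a continuity argument show that the strict gradient bound \eqref{con-grad} is propagated by the flow, so the solution is global and
\[
\sup_{t\in\R}\|u(t)\|_{H^1} \leq E_0 <\infty
\]
for a constant $E_0$ depending only on $M(u_0)$, $E(u_0)$ and $Q$. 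Here one uses that, for $N=2$ and $0<b<1$, the local theory of Dinh \cite{Dinh-weigh} places the solution in the Strichartz spaces $L^q_{\loc}W^{1,r}$ for every admissible pair, which is what makes the scattering argument run.

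\textbf{Step 2 (Coercivity).} Using the sharp weighted Gagliardo--Nirenberg inequality and the variational characterization of the ground state $Q$ solving \eqref{ell-equ}, I would upgrade the strict subthreshold conditions \eqref{con-energy}--\eqref{con-grad} to a uniform, time-independent spectral gap: there exists $\delta>0$ such that
\[
\|\nabla u(t)\|_{L^2}^2 - \frac{N\alpha+2b}{2(\alpha+2)}\int |x|^{-b}|u(t,x)|^{\alpha+2}\,dx \;\geq\; \delta\,\|\nabla u(t)\|_{L^2}^2 \qquad (t\in\R).
\]
This is precisely the combination that appears, with a favorable sign, in the second time-derivative of the virial quantity $\int|x|^2|u|^2\,dx$ for \eqref{INLS}, and it is the engine driving the decay.

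\textbf{Step 3 (Localized Morawetz) and Step 4 (scattering criterion).} Next I would introduce a radial weight $\varphi_R$ with $\varphi_R(x)=|x|^2$ on $|x|\leq R$ and bounded derivatives, and analyze the Morawetz action $M_R(t)=2\,\ima\int \bar u\,\nabla\varphi_R\cdot\nabla u\,dx$. Differentiating in $t$ and invoking the equation reproduces the coercive bulk term of Step 2 localized to $|x|\leq R$, plus error terms supported in $|x|>R$; since $|M_R(t)|\lesssim R\,E_0$, integrating over a window $[T,T+T_0]$ yields an averaged-in-time smallness of the potential energy in the ball, hence a sequence $t_n\to\infty$ along which $\int_{|x|\leq R}|u(t_n,x)|^2\,dx$ is small. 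I would then close with a Dodson--Murphy--type scattering criterion \cite{DM} adapted to \eqref{INLS}: there exist $\epsilon,R>0$, depending on $E_0$, such that
\[
\liminf_{t\to\infty}\int_{|x|\leq R}|u(t,x)|^2\,dx \leq \epsilon \;\Longrightarrow\; \|u\|_{L^q_t W^{1,r}_x(t>0)}<\infty,
\]
after which scattering follows from the Duhamel formula and the Strichartz bounds furnished by Step 1; the criterion itself is proved by splitting the Duhamel integral into a near-diagonal piece (small by the ball-smallness, radial decay, and dispersion) and a far piece (small by the $L^{p'}\to L^p$ decay of $e^{it\Delta}$).

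\textbf{Main obstacle.} I expect the crux to be the exterior error-term control in Step 3. In $N=2$ the radial Sobolev embedding only gives the weak decay $|u(x)|\lesssim |x|^{-1/2}\|u\|_{H^1}$, which alone is insufficient to absorb the errors from $|x|>R$; this is exactly the obstruction that forced Farah--Guzm\'an \cite{FG-high} to restrict to $b<\tfrac23$. The gain here must come from the singular weight $|x|^{-b}$, which supplies additional spatial decay, together with the supercriticality $\alpha>2-b$. Balancing the radial decay $|x|^{-1/2}$ against the factor $|x|^{-b}$ in the exterior region, and choosing $R$ and the window length $T_0$ so that the resulting errors are dominated by the coercive term of Step 2, is the delicate point on which the extension to the full range $0<b<1$ rests.
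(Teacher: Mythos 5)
Your Steps 1--3 essentially match the paper's ingredients (Lemma \ref{lem-coer-1}, the localized coercivity of Lemma \ref{lem-coer-2}, and the truncated virial computation of Proposition \ref{prop-mora-est}), but the argument has a genuine gap at Step 4. The scattering criterion you invoke --- ``mass smallness in a ball along a sequence of times implies space-time bounds'' --- is the Dodson--Murphy criterion of \cite{DM}, and its proof is dimension-sensitive: the ``far piece'' of the Duhamel splitting is controlled by the $L^1\to L^\infty$ dispersive estimate, whose kernel decays like $|t-s|^{-N/2}$. For $N=3$ this is integrable at infinity, so the far piece is $O(\tau^{-1/2})$ in terms of the window length $\tau$; for $N=2$ the decay $|t-s|^{-1}$ is \emph{not} integrable, and the same computation produces $\int_0^{T_0-\tau}|t-s|^{-1}\,ds \approx \log(T_0/\tau)$, which cannot be made small by choosing $\tau$ and in fact grows with $T_0$. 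The weight $|x|^{-b}$ does not rescue this: it improves spatial integrability of $\||x|^{-b}|u(s)|^{\alpha+1}\|_{L^1}$ but contributes nothing to the time integral. This failure of the 3D criterion in two dimensions is exactly why \cite{ADM} (and, following them, this paper and \cite{XZ}) had to proceed differently. Note also that your ``main obstacle'' paragraph points at the wrong difficulty: the exterior Morawetz errors are handled by the radial Sobolev embedding just as in higher dimensions, contributing only $O\left(R^{-2}+R^{-\frac{\alpha}{2}-b}\right)$; the true 2D obstruction is the non-integrable dispersive decay inside the scattering criterion.

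What the paper does instead is never to aim at mass concentration at all. It upgrades the virial computation into two \emph{quantitative} Morawetz bounds with explicit sublinear growth, \eqref{mora-est} and \eqref{mora-est-I}, whose exponents satisfy $\beta_1+\beta_2<1$ --- this is precisely where $0<b<1$ and $\alpha>2-b$ enter. Then, in the Duhamel integral over $[0,t_0]$, instead of bounding the nonlinearity in $L^1_x$ uniformly in time, one estimates $\int |x|^{-b}|u(s)|^{\alpha+1}\,dx \lesssim \left(\int |x|^{-b}|u(s)|^{\alpha+2}\,dx\right)^{1/2}$ and applies Cauchy--Schwarz in $s$, so the kernel appears as $|t-s|^{-2}$ (integrable) and the Morawetz bound $T^{\beta_1}$ enters under a square root; with the window length $t_1-t_0=\vareps T^{1-\beta_2}$ produced by pigeonholing from \eqref{mora-est-I}, the resulting bound $\left(\vareps T^{1-\beta_1-\beta_2}\right)^{-1/2}$ tends to zero as $T\to\infty$. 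Interpolating with an $L^4_{t,x}$ Strichartz bound, and treating $[t_0,t_1]$ with the nonlinear estimate of Lemma \ref{lem-non-est}, yields the global bound $\|u\|_{L^{\alpha+2+b}(\R\times\R^2)}<\infty$ of Proposition \ref{prop-glo-Lp-bou}, from which scattering follows by the standard Cauchy-in-$H^1$ argument. To repair your proposal you would need to replace Step 4 by this mechanism (or prove a genuinely two-dimensional scattering criterion, which is not available in the literature you cite).
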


	\begin{remark}
		Our result extends the one of \cite{FG-high} to the whole range of $b$ where the local well-posedness is available. 
	\end{remark}

	The proof of Theorem $\ref{theo-scat-2D-focus}$ is based on a recent argument of Arora-Dodson-Murphy \cite{ADM}. Due to the radially symmetric property of the solution, we first derive Morawetz estimates related to the solution. As a consequence, we get the space time estimates
	\[
	\int_0^T \int |x|^{-b}|u(t,x)|^{\alpha+2} dx dt \leq C(u_0,Q) T^{\beta_1}, \quad \beta_1:= \max\left\{ \frac{1}{3}, \frac{2}{\alpha+2+2b}\right\}
	\]
	for any $T>0$ sufficiently large and
	\[
	\int_I \int |u(t,x)|^{\alpha+2+b} dx dt \leq C(u_0,Q) |I|^{\beta_2}, \quad \beta_2:= \max \left\{\frac{2+b}{6}, \frac{2+b}{\alpha+2+2b} \right\}
	\]
	for any time interval $I \subset \R$. Note that $0<\beta_1+\beta_2 <1$ since $0<b<1$ and $\alpha>2-b$. Using the above space time estimates, we show the global bound $\|u\|_{L^{\alpha+2+b}(\R \times \R^2)} \leq C(u_0,Q)<\infty$ which implies the energy scattering. We refer the reader to Section $\ref{S3}$ for more details.

	\begin{remark}
		After finishing the manuscript, we learn that Xu-Zhao \cite{XZ} has simultaneously proved the same result as Theorem $\ref{theo-scat-2D-focus}$. 
	\end{remark}

	In the defocusing case, the energy scattering for \eqref{INLS} was first established in \cite{Dinh-weigh} by considering the initial data in the weighted $L^2$ space $\Sigma:= H^1 \cap L^2(|x|^2 dx)$. The energy scattering for the defocusing problem \eqref{INLS} with initial data in $H^1$ in dimensions $N\geq 3$ was proved by the author in \cite{Dinh-scat}. The proof is based on the decay property of global solutions. We refer the reader to Appendix $\ref{S4}$ for an alternative proof which makes use of the interaction Morawetz inequality. Our contribution in this direction is the following energy scattering for the defocusing \eqref{INLS} in 2D with radially symmetric initial data. 
	
	\begin{theorem} \label{theo-scat-2D-defocus}
		Let $N=2$, $0<b<1$ and $\alpha>2-b$. Let $u_0 \in H^1$ be radially symmetric. Then the corresponding solution to the defocusing problem \eqref{INLS} exists globally in time and scatters in both directions.
	\end{theorem}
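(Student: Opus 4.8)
The plan is to follow the Arora--Dodson--Murphy scheme already sketched for the focusing Theorem~\ref{theo-scat-2D-focus}, taking advantage of the fact that the defocusing problem is genuinely simpler. Global existence requires no threshold hypothesis: in the defocusing case the conserved energy
\[
E(u(t)) = \frac12\|\nabla u(t)\|_{L^2}^2 + \frac{1}{\alpha+2}\int |x|^{-b}|u(t,x)|^{\alpha+2}\,dx
\]
is a sum of two nonnegative terms, so energy conservation bounds $\|\nabla u(t)\|_{L^2}$ and mass conservation bounds $\|u(t)\|_{L^2}$, uniformly in $t$. This uniform $H^1$ bound, together with the local well-posedness theory (valid for $N=2$, $0<b<1$, $0<\alpha<\infty$), immediately promotes the local radial solution to a global one, so that the entire content of the statement is the scattering assertion---and no analogue of \eqref{con-energy}--\eqref{con-grad} is needed.

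The heart of the argument is to establish, for the global radial solution, the two space-time bounds
\[
\int_0^T\!\!\int |x|^{-b}|u(t,x)|^{\alpha+2}\,dx\,dt \leq C(u_0)\,T^{\beta_1}, \qquad \int_I\!\!\int |u(t,x)|^{\alpha+2+b}\,dx\,dt \leq C(u_0)\,|I|^{\beta_2},
\]
with $\beta_1,\beta_2$ as in the introduction and $0<\beta_1+\beta_2<1$. To obtain the first one I would work with the truncated virial/Morawetz functional $M_a(t) = 2\,\ima\int \bar u(t)\,\nabla a\cdot\nabla u(t)\,dx$ associated to a radial weight $a$ that equals $\tfrac12|x|^2$ on a ball $\{|x|\le R\}$ and is flattened for $|x|>R$ so that $|\nabla a|\lesssim R$ everywhere, whence $|M_a(t)|\lesssim R\,\|u(t)\|_{H^1}^2\lesssim R$. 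Differentiating in time and using the equation produces the quadratic form $\int \nabla^2 a\,(\nabla u,\nabla\bar u)\ge 0$ together with the nonlinear contribution; the decisive point is that in the defocusing case the latter carries a \emph{favorable} sign, so that on the ball one obtains, schematically,
\[
\frac{d}{dt}M_a(t) \gtrsim \int_{|x|\le R}|x|^{-b}|u(t,x)|^{\alpha+2}\,dx - (\text{error terms supported in }|x|>R).
\]
This is precisely the simplification over the focusing argument, where one must instead invoke the ground-state bounds \eqref{con-energy}--\eqref{con-grad} to secure the sign of the virial. Integrating over $[0,T]$, using $|M_a|\lesssim R$, and controlling the exterior errors by the radial Sobolev embedding $\||x|^{1/2}u\|_{L^\infty}\lesssim\|u\|_{L^2}^{1/2}\|\nabla u\|_{L^2}^{1/2}$---which turns the slow two-dimensional decay into an integrable tail away from the origin---then yields the first estimate after optimizing $R$ against $T$; the two regimes in $\beta_1=\max\{\tfrac13,\tfrac{2}{\alpha+2+2b}\}$ reflect the two ways of balancing the boundary term against the exterior error. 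The second estimate follows by a parallel computation, again using radial Sobolev to trade the weight $|x|^{-b}$ for extra powers of $|u|$.

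With both bounds in hand and $\beta_1+\beta_2<1$, I would run a Strichartz bootstrap to upgrade the local-in-time control to the global bound $\|u\|_{L^{\alpha+2+b}(\R\times\R^2)}\le C(u_0)<\infty$. Partitioning $\R$ into subintervals and applying the Strichartz estimates to the Duhamel formula $u(t)=e^{it\Delta}u_0 \mp i\int_0^t e^{i(t-s)\Delta}\big(|x|^{-b}|u|^\alpha u\big)\,ds$, one estimates the nonlinear term on each piece by the two Morawetz bounds; the sublinearity $\beta_1+\beta_2<1$ is exactly what makes the resulting recursive estimate close with a finite total. The finite global Strichartz norm then gives scattering in $H^1$ by the standard Cauchy-criterion argument: $e^{-it\Delta}u(t)$ converges in $H^1$ as $t\to\pm\infty$, and one sets $u_0^\pm$ to be the respective limits.

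I expect the principal obstacle to be the Morawetz estimate itself in the low dimension $N=2$. The singular weight $|x|^{-b}$ demands care near the origin when one differentiates the nonlinearity---the extra term $\int \nabla a\cdot\nabla(|x|^{-b})\,|u|^{\alpha+2}$ must be shown to have, or to be dominated by, the good sign---while the comparatively weak radial decay in two dimensions makes the exterior error terms more delicate than in the higher-dimensional treatment of \cite{Dinh-scat}. Pinning down the precise exponents, and in particular verifying $\beta_1+\beta_2<1$ throughout the full range $0<b<1$, $\alpha>2-b$, is where the balancing of the truncation radius $R$ against the time scale is most demanding.
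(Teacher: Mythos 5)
Your proposal is correct and follows essentially the same route as the paper: a truncated radial virial/Morawetz functional in which the defocusing nonlinearity carries a favorable sign (so no ground-state coercivity input is needed), exterior errors controlled by the radial Sobolev embedding, optimization of $R$ against $T$ to obtain the two space-time bounds with $\beta_1+\beta_2<1$, and then the Arora--Dodson--Murphy Strichartz bootstrap and Cauchy-criterion argument exactly as in the focusing case. This is precisely how the paper proceeds, via Corollary~\ref{coro-mora-est} and Corollary~\ref{coro-glo-Lp-bou} combined with the proof of Theorem~\ref{theo-scat-2D-focus}.
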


	This paper is organized as follows. In Section $\ref{S2}$, we  give some preliminaries including Strichartz estimates, some variational analysis and Morawetz estimates related to the equation. In Section $\ref{S3}$, we give the proofs of the energy scattering given in Theorem $\ref{theo-scat-2D-focus}$ and Theorem $\ref{theo-scat-2D-defocus}$. Finally, an alternative proof of the energy scattering for the defocusing  problem \eqref{INLS} in dimensions $N\geq 3$ is given in the Appendix.
	
	\section{Preliminaries}
	\label{S2}
	\setcounter{equation}{0}
	\subsection{Strichartz estimates}
	Let $I\subset \R$ and $q,r \in [1,\infty]$. We define the mixed norm 
	\[
	\|u\|_{L^q(I,L^r)} := \left( \int_I \left( \int_{\R^N} |u(t,x)|^r dx \right)^{\frac{q}{r}} dt \right)^{\frac{1}{q}}
	\]
	with a usual modification when either $q$ or $r$ are infinity. When $q=r$, we use the notation $L^q(I \times \R^N)$ instead of $L^q(I,L^q)$.
	\begin{definition}
		A pair $(q,r)$ is said to be Schr\"odinger admissible, for short $(q,r) \in S$, if
		\[
		(q,r) \in [2,\infty]^2, \quad (q,r,N) \ne (2,\infty,2), \quad \frac{2}{q}+\frac{N}{r} =\frac{N}{2}.
		\]
	\end{definition}
	For any interval $I\subset \R$, we denote the Strichartz norm
	\begin{align} \label{str-cha-norm}
	\|u\|_{S(L^2,I)} :=\sup_{(q,r) \in S} \|u\|_{L^q(I, L^r)}, \quad \|v\|_{S'(L^2,I)} := \sup_{(q,r)\in S} \|v\|_{L^{q'}(I,L^{r'})},
	\end{align}
	where $(q,q')$ and $(r,r')$ are H\"older conjugate pairs. 
	
	We next recall the well-known Strichartz estimates for the linear Schr\"odinger equation (see e.g. \cite{Cazenave, Tao}).
	\begin{proposition}[\cite{Cazenave, Tao}] 
		Let $u$ be a solution to the linear Schr\"odinger equation, namely
		\[
		u(t) = e^{it\Delta} u_0 + \int_0^t e^{i(t-s)\Delta} F(s) ds
		\]
		for some data $u_0$ and $F$. Then it holds that
		\[
		\|u\|_{S(L^2,\R)} \lesssim \|u_0\|_{L^2} + \|F\|_{S'(L^2,\R)}.
		\]
	\end{proposition}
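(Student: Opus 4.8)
The plan is to establish the inequality by the classical three-step scheme: derive the dispersive decay bound, run the $TT^*$ argument together with the Hardy--Littlewood--Sobolev inequality to obtain the homogeneous estimate and its dual, and then pass to the retarded Duhamel integral via the Christ--Kiselev lemma. First I would record the two basic mapping properties of the free propagator. The $L^2$ isometry $\|e^{it\Delta} f\|_{L^2} = \|f\|_{L^2}$ follows from Plancherel, since $e^{it\Delta}$ is a Fourier multiplier of modulus one; the pointwise dispersive estimate $\|e^{it\Delta} f\|_{L^\infty} \lesssim |t|^{-N/2} \|f\|_{L^1}$ follows from the explicit oscillatory kernel representation of $e^{it\Delta}f$. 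Interpolating between these two endpoints (Riesz--Thorin) yields the full family of decay estimates
\[
\|e^{it\Delta} f\|_{L^r} \lesssim |t|^{-N\left( \frac{1}{2} - \frac{1}{r}\right)} \|f\|_{L^{r'}}, \qquad 2 \leq r \leq \infty.
\]

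Next I would prove the homogeneous estimate $\|e^{it\Delta} u_0\|_{L^q(\R, L^r)} \lesssim \|u_0\|_{L^2}$ for every admissible pair by the $TT^*$ method. Writing $T u_0 := e^{it\Delta} u_0$, this bound is equivalent to the boundedness of $T T^* F = \int_{\R} e^{i(t-s)\Delta} F(s)\, ds$ from $L^{q'}(\R, L^{r'})$ to $L^q(\R, L^r)$. Inserting the decay estimate above inside the time integral and then applying the Hardy--Littlewood--Sobolev inequality in $t$ yields the claim: the exponents match precisely because the admissibility relation $\frac{2}{q} + \frac{N}{r} = \frac{N}{2}$ forces the decay rate to equal $N(\frac{1}{2}-\frac{1}{r}) = \frac{2}{q}$, and the Hardy--Littlewood--Sobolev hypothesis $0 < \frac{2}{q} < 1$ is exactly the non-endpoint condition $q > 2$. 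Dualizing $T$ then gives $\|\int_{\R} e^{-is\Delta} F(s)\, ds\|_{L^2} \lesssim \|F\|_{S'(L^2,\R)}$, so that any homogeneous output norm is controlled by any dual input norm.

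To handle the genuine Duhamel term $\int_0^t e^{i(t-s)\Delta} F(s)\, ds$, which involves the retarded (truncated) time integral rather than the full one already controlled above, I would invoke the Christ--Kiselev lemma. It upgrades the bound for the untruncated operator to the truncated one provided the input exponent is strictly smaller than the output exponent, that is $q' < q$, which holds for every admissible pair except the endpoint $q = 2$. Combining the homogeneous bound, its dual, and this Christ--Kiselev step, and finally taking the supremum over all admissible $(q,r)$ in the norms $\|\cdot\|_{S(L^2,\R)}$ and $\|\cdot\|_{S'(L^2,\R)}$, produces the stated inequality for all non-endpoint pairs.

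The main obstacle is the endpoint $q = 2$, which in the range considered here occurs only for $N \geq 3$ (the pair $(2,\infty,2)$ being excluded from admissibility when $N = 2$). Here Christ--Kiselev fails because the input and output time-exponents coincide, and one must instead use the bilinear real-interpolation argument of Keel--Tao: one dyadically decomposes the retarded integral according to $|t-s| \sim 2^j$, estimates each dyadic block by interpolating between the $L^2$ isometry and the dispersive bound, and sums the resulting near-geometric series. Since the main results of this paper (Theorem $\ref{theo-scat-2D-focus}$ and Theorem $\ref{theo-scat-2D-defocus}$) are set in dimension $N = 2$, the endpoint never actually intervenes, and for the higher-dimensional appendix one may simply cite \cite{Cazenave, Tao} for this delicate case.
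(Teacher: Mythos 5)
Your proposal is correct and follows exactly the standard Ginibre--Velo/Keel--Tao scheme (dispersive estimate, $TT^*$ with Hardy--Littlewood--Sobolev, Christ--Kiselev for the retarded integral, bilinear interpolation at the endpoint $q=2$), which is precisely the proof in the references \cite{Cazenave, Tao} that the paper cites for this proposition without reproducing an argument. You also correctly note that the mixed-pair case follows by composing the dual homogeneous bound with the homogeneous bound before applying Christ--Kiselev, and that the only case it misses is the double-endpoint one handled by Keel--Tao, so nothing is missing.
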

	
	\subsection{Variational analysis}
	Let us recall some properties related to the ground state $Q$  which is the unique positive radial decreasing solution to
	\[
	\Delta Q - Q + |x|^{-b} |Q|^\alpha Q =0.
	\]
	The ground state $Q$ optimizes the following Gagliardo-Nirenberg inequality: $N\geq 1$, $0<b<\min\{2,N\}$ and $0<\alpha <2^*$ (see \eqref{def-2-star}),
	\[
	\int |x|^{-b} |f(x)|^{\alpha+2} dx \leq C_{\opt} \|f\|^{\frac{4-2b - (N-2)\alpha}{2}}_{L^2} \|\nabla f\|^{\frac{N\alpha+2b}{2}}_{L^2}, \quad f \in H^1,
	\]
	that is,
	\[
	C_{\opt} = \int |x|^{-b} |Q(x)|^{\alpha+2} dx \div \left[\|Q\|^{\frac{4-2b - (N-2)\alpha}{2}}_{L^2} \|\nabla Q\|^{\frac{N\alpha+2b}{2}}_{L^2} \right].
	\]
	It was shown in \cite{Farah} that $Q$ satisfies the following Pohozaev's identities
	\[
	\|Q\|^2_{L^2} = \frac{4-2b-(N-2)\alpha}{N\alpha+2b} \|\nabla Q\|^2_{L^2} = \frac{4-2b-(N-2)\alpha}{2(\alpha+2)} \int |x|^{-b} |Q(x)|^{\alpha+2} dx.
	\]
	In particular,
	\[
	C_{\opt} = \frac{2(\alpha+2)}{N\alpha+2b} \left( \|\nabla Q\|_{L^2} \|Q\|_{L^2}^{\sigc} \right)^{-\frac{N\alpha-4+2b}{2}},
	\]
	where $\sigc$ is defined in \eqref{def-sigc}.
	
	\begin{lemma}[\cite{FG-high}] \label{lem-coer-1}
		Let $N\geq 1$, $0<b<\min \{2,N\}$ and $2_*<\alpha <2^*$. Let $u_0 \in H^1$ satisfy \eqref{con-energy} and \eqref{con-grad}. Then the corresponding solution to the focusing problem \eqref{INLS} satisfies
		\begin{align} \label{glo-bou-1}
		\|\nabla u(t)\|_{L^2} \|u(t)\|^{\sigc}_{L^2} < \|\nabla Q\|_{L^2} \|Q\|^{\sigc}_{L^2}
		\end{align}
		for all $t$ in the existence time. In particular, the corresponding solution to the focusing problem \eqref{INLS} exists globally in time. Moreover, there exists $\rho=\rho(u_0,Q)>0$ such that
		\begin{align} \label{glo-bou-2}
		\|\nabla u(t)\|_{L^2} \|u(t)\|^{\sigc}_{L^2}< (1-2\rho)\|\nabla Q\|_{L^2} \|Q\|^{\sigc}_{L^2}
		\end{align}
		for all $t \in \R$.
	\end{lemma}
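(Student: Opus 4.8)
The plan is to reduce the whole statement to the elementary analysis of a single real-variable function built from the Gagliardo--Nirenberg inequality and the conservation laws. Write the focusing energy as $E(u(t)) = \tfrac{1}{2}\|\nabla u(t)\|_{L^2}^2 - \tfrac{1}{\alpha+2}\int |x|^{-b}|u(t)|^{\alpha+2}\,dx$, and set $g(t) := \|\nabla u(t)\|_{L^2}\|u(t)\|_{L^2}^{\sigc}$ together with the ground-state quantity $g_Q := \|\nabla Q\|_{L^2}\|Q\|_{L^2}^{\sigc}$. Since the mass $M(u(t)) = \|u(t)\|_{L^2}^2$ is conserved, I would multiply the energy by $[M(u(t))]^{\sigc}$ and estimate the potential term by the Gagliardo--Nirenberg inequality recorded above, which yields
\[
E(u(t))[M(u(t))]^{\sigc} \geq \frac{1}{2}g(t)^2 - \frac{C_{\opt}}{\alpha+2}g(t)^p, \qquad p := \frac{N\alpha+2b}{2}.
\]
The crucial point is that every power of $\|u(t)\|_{L^2}$ cancels exactly, which is precisely what the definition \eqref{def-sigc} of $\sigc$ encodes; a one-line computation using $\sigc(N\alpha-4+2b) = 4-2b-(N-2)\alpha$ confirms this homogeneity. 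Note $p>2$ exactly because $\alpha>2_*=\tfrac{4-2b}{N}$, so the right-hand side is a function $h(y) := \tfrac12 y^2 - \tfrac{C_{\opt}}{\alpha+2}y^p$ that increases on $[0,y^*]$ and decreases on $[y^*,\infty)$, with a unique positive critical point $y^*$.

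Second, I would identify this profile with the ground state. Differentiating gives $y^* = \left(\frac{\alpha+2}{C_{\opt}\,p}\right)^{1/(p-2)}$, and inserting the explicit value $C_{\opt} = \frac{2(\alpha+2)}{N\alpha+2b}(g_Q)^{-(N\alpha-4+2b)/2}$ from the variational subsection — together with $p-2=\tfrac{N\alpha-4+2b}{2}$ — collapses this to $y^*=g_Q$. Evaluating $h$ at its maximum then gives $h(g_Q) = \left(\tfrac12-\tfrac1p\right)g_Q^2$, and the two Pohozaev identities for $Q$ recast $E(Q)[M(Q)]^{\sigc}$ into the very same closed form, so that $h(g_Q) = E(Q)[M(Q)]^{\sigc}$. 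In other words, the maximal value of $h$ is exactly the threshold in \eqref{con-energy} and its maximizer is exactly the threshold in \eqref{con-grad}.

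Third, I would run the standard continuity argument. By conservation of mass and energy and hypothesis \eqref{con-energy},
\[
h(g(t)) \leq E(u(t))[M(u(t))]^{\sigc} = E(u_0)[M(u_0)]^{\sigc} < E(Q)[M(Q)]^{\sigc} = h(g_Q)
\]
for all $t$ in the maximal interval of existence. Since $t\mapsto g(t)$ is continuous (the solution lies in $C(I,H^1)$) and $g(0)<g_Q$ by \eqref{con-grad}, the graph of $g$ cannot reach the level $g_Q$: otherwise the intermediate value theorem would produce a time with $g=g_Q$, forcing $h(g)=h(g_Q)$ and contradicting the strict inequality. Hence $g(t)<g_Q$ for all $t$, which is \eqref{glo-bou-1}. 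As $\|u(t)\|_{L^2}$ is conserved, this bounds $\|\nabla u(t)\|_{L^2}$ uniformly, so $\sup_t\|u(t)\|_{H^1}<\infty$ and the blow-up alternative forces the solution to be global. Finally, the strictness in \eqref{con-energy} furnishes $\delta>0$ with $E(u_0)[M(u_0)]^{\sigc}\leq(1-\delta)h(g_Q)$; since $h$ is strictly increasing on $[0,g_Q]$ and $g(t)$ stays in this branch, the sublevel set $\{y\in[0,g_Q]: h(y)\leq(1-\delta)h(g_Q)\}$ is an interval $[0,(1-2\rho)g_Q]$ for some $\rho=\rho(u_0,Q)>0$, which gives \eqref{glo-bou-2}.

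The only genuinely delicate step is the algebraic matching in the second paragraph: verifying that both the maximizer and the maximal value of $h$ coincide with the ground-state quantities $g_Q$ and $E(Q)[M(Q)]^{\sigc}$. Everything else — the homogeneity cancellation, the intermediate-value argument, and global existence via the blow-up alternative — is routine once that identification is in place, so I would lean on the explicit formula for $C_{\opt}$ and the Pohozaev identities stated just above the lemma rather than rederive the exponent bookkeeping by hand.
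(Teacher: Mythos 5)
Your proposal is correct, and it is essentially the argument of \cite[Lemma 4.2]{FG-high} (itself the standard Holmer--Roudenko-type coercivity argument), to which the paper simply defers without reproducing a proof: Gagliardo--Nirenberg plus mass/energy conservation to bound $h(g(t))$ by the conserved quantity, the Pohozaev identities to identify the maximizer and maximum of $h$ with $\|\nabla Q\|_{L^2}\|Q\|_{L^2}^{\sigc}$ and $E(Q)[M(Q)]^{\sigc}$, and a continuity/intermediate-value argument yielding \eqref{glo-bou-1}, global existence via the blow-up alternative, and the refined bound \eqref{glo-bou-2} from the strict energy gap. All the algebraic identifications you flag as the delicate step check out exactly as you describe.
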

	We refer the reader to \cite[Lemma 4.2]{FG-high} for the proof of this result.
	
	\begin{lemma}[\cite{Campos}] \label{lem-coer-2}
		Let $N\geq 1$, $0<b<\min \{2,N\}$ and $2_*<\alpha <2^*$. Let $u_0 \in H^1$ satisfy \eqref{con-energy} and \eqref{con-grad}. Let $\rho$ be as in Lemma $\ref{lem-coer-1}$. Then there exists $R_0=R_0(\rho, u_0)>0$ such that for any $R \geq R_0$,
		\begin{align} \label{glo-bou-3}
		\|\nabla(\chi_R u(t))\|_{L^2} \|\chi_R u(t)\|^{\sigc}_{L^2} <(1-\rho) \|\nabla Q\|_{L^2} \|Q\|^{\sigc}_{L^2}
		\end{align}
		for all $t\in \R$, where $\chi_R(x) = \chi(x/R)$ with $\chi \in C^\infty_0(\R^N)$ satisfying $0\leq \chi \leq 1$, $\chi=1$ on $B(0,1/2)$ and $\chi=0$ on $B^c(0,1)$. Moreover, there exists $\delta=\delta(\rho)>0$ such that
		\begin{align} \label{lower-bou-virial}
		\|\nabla(\chi_R u(t))\|^2_{L^2} - \frac{N\alpha+2b}{2(\alpha+2)} \int |x|^{-b} |\chi_R u(t,x)|^{\alpha+2} dx \geq \delta \int |x|^{-b} |\chi_R u(t,x)|^{\alpha+2} dx
		\end{align}
		for all $t\in \R$.
	\end{lemma}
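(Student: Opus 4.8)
The plan is to transfer the uniform bound \eqref{glo-bou-2} from $u(t)$ to its spatial truncation $\chi_R u(t)$, at the cost of weakening the constant from $1-2\rho$ to $1-\rho$, and then to upgrade \eqref{glo-bou-3} to the coercive lower bound \eqref{lower-bou-virial} by a direct application of the Gagliardo–Nirenberg inequality together with the Pohozaev expression for $C_{\opt}$. First I would record that \eqref{glo-bou-2} and the mass conservation $\|u(t)\|_{L^2}=\|u_0\|_{L^2}$ furnish a uniform-in-time Sobolev bound $\sup_{t\in\R}\|u(t)\|_{H^1}=:C_0(u_0,Q)<\infty$; indeed $\|\nabla u(t)\|_{L^2}<(1-2\rho)\|\nabla Q\|_{L^2}\|Q\|^{\sigc}_{L^2}\,\|u_0\|^{-\sigc}_{L^2}$.

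For the first assertion \eqref{glo-bou-3}, I would expand $\nabla(\chi_R u)=\chi_R\nabla u+(\nabla\chi_R)u$ and use $0\le\chi_R\le1$ together with $\supp(\nabla\chi_R)\subset\{R/2\le|x|\le R\}$ and $|\nabla\chi_R|\lesssim R^{-1}$. Since $\chi_R^2\le1$, the cross term is controlled by Cauchy–Schwarz by $\tfrac{C}{R}\|\nabla u\|_{L^2}\|u\|_{L^2}$ and the last term by $\tfrac{C}{R^2}\|u\|^2_{L^2}$, so that, uniformly in $t$,
\[
\|\nabla(\chi_R u(t))\|^2_{L^2}\le\|\nabla u(t)\|^2_{L^2}+C(C_0)R^{-1},\qquad\text{hence}\qquad \|\nabla(\chi_R u(t))\|_{L^2}\le\|\nabla u(t)\|_{L^2}+C(C_0)R^{-1/2}.
\]
Multiplying by $\|\chi_R u(t)\|^{\sigc}_{L^2}\le\|u_0\|^{\sigc}_{L^2}$ and invoking \eqref{glo-bou-2}, the right-hand side is at most $(1-2\rho)\|\nabla Q\|_{L^2}\|Q\|^{\sigc}_{L^2}+C(C_0,u_0)R^{-1/2}$; choosing $R_0=R_0(\rho,u_0)$ so that the error falls below $\rho\|\nabla Q\|_{L^2}\|Q\|^{\sigc}_{L^2}$ yields \eqref{glo-bou-3} for all $R\ge R_0$ and all $t$.

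For \eqref{lower-bou-virial}, write $v:=\chi_R u(t)$ and apply the Gagliardo–Nirenberg inequality. The book-keeping is that the exponent $\tfrac{N\alpha+2b}{2}$ on $\|\nabla v\|_{L^2}$ exceeds $2$ by exactly $p:=\tfrac{N\alpha-4+2b}{2}$ (positive precisely because $\alpha>2_*$), while $p\,\sigc=\tfrac{4-2b-(N-2)\alpha}{2}$ matches the power of $\|v\|_{L^2}$, so the surplus factors reassemble into the scaling-invariant quantity; this gives
\[
\int |x|^{-b}|v|^{\alpha+2}\,dx\le C_{\opt}\,\|\nabla v\|^2_{L^2}\big(\|\nabla v\|_{L^2}\|v\|^{\sigc}_{L^2}\big)^{p}.
\]
Substituting $C_{\opt}=\tfrac{2(\alpha+2)}{N\alpha+2b}\big(\|\nabla Q\|_{L^2}\|Q\|^{\sigc}_{L^2}\big)^{-p}$ and inserting \eqref{glo-bou-3} in the form $\|\nabla v\|_{L^2}\|v\|^{\sigc}_{L^2}<(1-\rho)\|\nabla Q\|_{L^2}\|Q\|^{\sigc}_{L^2}$ produces
\[
\frac{N\alpha+2b}{2(\alpha+2)}\int |x|^{-b}|v|^{\alpha+2}\,dx<(1-\rho)^{p}\,\|\nabla v\|^2_{L^2}.
\]
Rearranging and setting $\delta:=\tfrac{N\alpha+2b}{2(\alpha+2)}\big((1-\rho)^{-p}-1\big)>0$ gives \eqref{lower-bou-virial}.

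The only genuinely delicate point is making the truncation error in the gradient estimate uniform in $t$; this is exactly where the global-in-time Sobolev bound $C_0(u_0,Q)$ extracted from Lemma \ref{lem-coer-1} enters, and it is what forces $R_0$ to depend on $\rho$ and $u_0$ but not on $t$. Everything else is algebraic manipulation of the Pohozaev identities, the crucial identity being $p\,\sigc=\tfrac{4-2b-(N-2)\alpha}{2}$, which lets the Gagliardo–Nirenberg factors collapse onto $\|\nabla v\|_{L^2}\|v\|^{\sigc}_{L^2}$ and thereby converts the localized bound \eqref{glo-bou-3} directly into the coercive lower bound \eqref{lower-bou-virial}.
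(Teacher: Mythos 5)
Your proof is correct. The paper does not prove this lemma itself---it defers to \cite[Lemma 4.4]{Campos}---and your argument (uniform-in-time $H^1$ bound from \eqref{glo-bou-2} plus mass conservation, an $O(R^{-1/2})$ truncation error absorbed by taking $R_0=R_0(\rho,u_0)$ large, and then the sharp Gagliardo--Nirenberg inequality with $C_{\opt}$ expressed through the Pohozaev identities so that \eqref{glo-bou-3} yields \eqref{lower-bou-virial} with $\delta=\tfrac{N\alpha+2b}{2(\alpha+2)}\bigl((1-\rho)^{-\frac{N\alpha-4+2b}{2}}-1\bigr)$) is essentially the same as the cited one, so nothing further is needed.
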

	We refer the reader to \cite[Lemma 4.4]{Campos} for the proof of this result.
	
	\subsection{Morawetz estimate}
	Let us start with the following virial identity.
	\begin{lemma}[Virial identity \cite{Farah, Dinh-blow}] \label{lem-virial-iden}
		Let $N\geq 1$, $0<b<\min \{2,N\}$ and $0<\alpha<2^*$. Let $\varphi: \R^N \rightarrow \R$ be a sufficiently smooth and decaying function. Let $u$ be a solution to \eqref{INLS}. Define
		\[
		M_\varphi(t):= 2 \int \nabla \varphi \cdot \ima \left( \overline{u}(t) \nabla u(t)\right) dx.
		\]
		Then it holds that
		\begin{align*}
		\frac{d}{dt} M_\varphi(t) = -\int \Delta^2 \varphi |u(t)|^2 dx &+ 4 \sum_{j,k=1}^N \int \partial^2_{jk} \varphi \rea \left( \partial_j \overline{u}(t) \partial_k u(t)\right) dx \\
		&\pm \frac{2\alpha}{\alpha+2} \int |x|^{-b} \Delta \varphi |u(t)|^{\alpha+2} dx \pm \frac{4b}{\alpha+2} \int |x|^{-b-2} x \cdot \nabla \varphi |u(t)|^{\alpha+2} dx.
		\end{align*}
	\end{lemma}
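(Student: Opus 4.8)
The plan is to differentiate $M_\varphi(t)$ directly in time, substitute the equation \eqref{INLS} to eliminate $\partial_t u$, and then reorganize the result by integration by parts into a kinetic (linear) contribution and a potential (nonlinear) contribution. Since the local theory provides a solution in $C(I,H^1)$ lying locally in the Strichartz spaces $L^q_{\loc}(I,W^{1,r})$, I would first verify the identity for smooth, rapidly decaying solutions, for which all the manipulations below are justified, and then pass to the general case by a standard approximation argument, using continuity of both sides in $H^1$ together with the smoothness and decay of $\varphi$. Throughout, the assumption $0<b<\min\{2,N\}$ guarantees that $|x|^{-b}$ is locally integrable, keeping every integral that appears finite.

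First I would write, with $\partial_t P_j := \partial_t \ima(\overline{u}\,\partial_j u)$ for the time derivative of the momentum density,
\[
\frac{d}{dt} M_\varphi(t) = 2\sum_{j} \int \partial_j \varphi\, \partial_t \ima\left(\overline{u}\,\partial_j u\right) dx,
\]
and replace $\partial_t u$ by $\partial_t u = i\Delta u \mp i|x|^{-b}|u|^\alpha u$, read off from \eqref{INLS}. This splits $\partial_t P_j$ into a piece coming from $i\Delta u$ and a piece coming from the nonlinearity. For the linear piece I would perform the classical free-Schr\"odinger virial computation: taking imaginary parts and integrating by parts repeatedly transfers every derivative onto $\varphi$ and produces exactly $-\int \Delta^2\varphi\,|u|^2\,dx + 4\sum_{j,k}\int \partial^2_{jk}\varphi\,\rea(\partial_j\overline{u}\,\partial_k u)\,dx$. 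This part does not see $b$ or $\alpha$ and is entirely standard.

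The delicate step, which I expect to be the main obstacle, is the nonlinear piece, since the inhomogeneity $|x|^{-b}$ must be differentiated and its gradient tracked. Writing $W:=|x|^{-b}$ and using $|u|^\alpha\partial_j|u|^2 = \tfrac{2}{\alpha+2}\partial_j|u|^{\alpha+2}$, the imaginary parts combine into an expression of the form $\mp\tfrac{\alpha}{\alpha+2}W\,\partial_j|u|^{\alpha+2}\mp(\partial_j W)\,|u|^{\alpha+2}$. Pairing this against $\partial_j\varphi$ and integrating by parts once more, to move $\partial_j$ off $|u|^{\alpha+2}$ in the first term, yields one $\Delta\varphi$ contribution together with two contributions proportional to $\nabla\varphi\cdot\nabla W$; collecting the coefficients gives
\[
\pm\frac{2\alpha}{\alpha+2}\int W\,\Delta\varphi\,|u|^{\alpha+2}\,dx \;\mp\;\frac{4}{\alpha+2}\int (\nabla\varphi\cdot\nabla W)\,|u|^{\alpha+2}\,dx.
\]
Substituting $\nabla W = -b\,|x|^{-b-2}x$ converts the second integral into $\pm\tfrac{4b}{\alpha+2}\int |x|^{-b-2}(x\cdot\nabla\varphi)\,|u|^{\alpha+2}\,dx$, and the first is $\pm\tfrac{2\alpha}{\alpha+2}\int |x|^{-b}\,\Delta\varphi\,|u|^{\alpha+2}\,dx$, which are precisely the two nonlinear terms in the statement.

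The remaining care concerns integrability near the origin of the terms carrying the singular weight and the vanishing of boundary terms at infinity. The latter follows from the decay of $\varphi$ and $u$; for the former, the integrand of the $\Delta^2\varphi$ and kinetic terms is manifestly integrable, while $|x|^{-b}|u|^{\alpha+2}$ and $|x|^{-b-2}(x\cdot\nabla\varphi)|u|^{\alpha+2}$ are controlled in the admissible range $0<b<\min\{2,N\}$ (for a radial $\varphi$ one even has $x\cdot\nabla\varphi=O(|x|^2)$ near $0$, which removes the singularity of $|x|^{-b-2}x$ entirely). Combining the linear and nonlinear contributions gives the asserted formula for $\tfrac{d}{dt}M_\varphi(t)$.
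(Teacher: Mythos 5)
Your proof is correct: the momentum-density computation, the identity $\rea\left(\overline{u}\,\partial_j(|u|^\alpha u)\right)=\tfrac{\alpha+1}{\alpha+2}\partial_j|u|^{\alpha+2}$, the integration by parts that produces the coefficient $\mp\tfrac{4}{\alpha+2}$ on $\nabla\varphi\cdot\nabla W$, and the substitution $\nabla(|x|^{-b})=-b|x|^{-b-2}x$ all check out and reproduce exactly the stated identity, including signs. The paper itself gives no proof of this lemma---it is quoted from \cite{Farah, Dinh-blow}---and the argument in those references is the same standard direct computation (differentiate, substitute the equation, integrate by parts, justify by regularization) that you carry out, so your approach is essentially the canonical one.
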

	We now define a non-negative function $\varphi: [0,\infty) \rightarrow [0,\infty)$ satisfying
	\[
	\varphi (r) = \left\{
	\begin{array}{cl}
	r^2 &\text{if } 0 \leq r \leq 1, \\
	\text{smooth} &\text{if } 1<r<2, \\
	2 &\text{if } r\geq 2,
	\end{array}
	\right.
	\quad \varphi'(r) \geq 0, \quad 0 \leq \varphi''(r) \leq 2 \text{ for any } r\geq 0.
	\]
	Given $R>0$, we define a radial function
	\begin{align} \label{def-varphi-R}
	\varphi_R(x) = \varphi_R(r):= R^2 \varphi(r/R), \quad r=|x|.
	\end{align}
	It is easy to check that
	\[
	2-\varphi''_R(r) \geq 0, \quad 2-\frac{\varphi'(r)}{r} \geq 0, \quad 2N-\Delta \varphi_R(x) \geq 0, \quad \forall r \geq 0, x \in \R^N.
	\]
	We also have that
	\[
	\|\nabla^k \varphi_R\|_{L^\infty} \lesssim R^{2-k}, \quad k=0,\cdots, 4
	\]
	and
	\[
	\supp(\nabla^k \varphi_R) \subset \left\{
	\begin{array}{cl}
	\{|x| \leq 2R\} &\text{if } k=1,2, \\
	\{R \leq |x| \leq 2R\} &\text{if } k=3,4.
	\end{array}
	\right.
	\]
	
	\begin{proposition} \label{prop-mora-est}
		Let $N\geq 2$, $0<b<2$ and $2_*<\alpha<2^*$. Let $u_0 \in H^1$ be radially symmetric and satisfy \eqref{con-energy} and \eqref{con-grad}. Then for any $T>0$ sufficiently large, the corresponding global solution to the focusing problem \eqref{INLS} satisfies
		\begin{align} \label{mora-est}
		\int_0^T \int |x|^{-b} |u(t,x)|^{\alpha+2} dx dt \leq C(u_0,Q) T^{\beta_1}, \quad \beta_1:= \max \left\{\frac{1}{3}, \frac{2}{(N-1)\alpha+2+2b}\right\}
		\end{align}
		for some constant $C(u_0,Q)$ depending only on $u_0$ and $Q$. Moroever, for any interval $I \subset \R$,
		\begin{align} \label{mora-est-I}
		\int_I \|u(t)\|^{\alpha+2+\frac{b}{N-1}}_{L^{\alpha+2+\frac{b}{N-1}}} dt \leq C(u_0,Q) |I|^{\beta_2}, \quad \beta_2:= \max \left\{\frac{2+b}{6}, \frac{2+b}{(N-1)\alpha +2+2b} \right\}.
		\end{align}
	\end{proposition}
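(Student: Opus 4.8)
The plan is to run a localized virial (Morawetz) argument with the truncated weight $\varphi_R$ of \eqref{def-varphi-R}, to combine it with the coercivity estimate \eqref{lower-bou-virial} in the interior and with the radial Sobolev embedding in the exterior, and finally to optimize the truncation radius against the length of the time interval. Throughout I work in the focusing case and use Lemma~\ref{lem-coer-1}, so that the solution is global with $\|u(t)\|_{L^2}$ and $\|\nabla u(t)\|_{L^2}$ bounded by a constant $C(u_0,Q)$; since $\|\nabla\varphi_R\|_{L^\infty}\lesssim R$, the quantity $M_{\varphi_R}$ of Lemma~\ref{lem-virial-iden} satisfies
\[
|M_{\varphi_R}(t)| \le 2\|\nabla\varphi_R\|_{L^\infty}\|u(t)\|_{L^2}\|\nabla u(t)\|_{L^2} \le C(u_0,Q)\,R\qquad\text{for all }t,
\]
which will serve as the boundary term after integrating in time. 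Applying Lemma~\ref{lem-virial-iden} with $\varphi=\varphi_R$ and using $\nabla u=\frac{x}{|x|}\partial_r u$, the Hessian term reduces to $4\int\varphi_R''\,|\nabla u|^2\,dx$; on $|x|\le R$ one has $\varphi_R(x)=|x|^2$, hence $\varphi_R''=2$, $\Delta\varphi_R=2N$, $|x|^{-b-2}x\cdot\nabla\varphi_R=2|x|^{-b}$ and $\Delta^2\varphi_R=0$. Discarding the non-negative kinetic contribution from the annulus, the interior terms combine (in the focusing case) into $8$ times the functional in \eqref{lower-bou-virial},
\[
8\Big[\int_{|x|\le R}|\nabla u|^2\,dx-\frac{N\alpha+2b}{2(\alpha+2)}\int_{|x|\le R}|x|^{-b}|u|^{\alpha+2}\,dx\Big];
\]
after replacing the sharp cut-off by the smooth $\chi_R$, at the cost of gradient-of-cut-off errors of size $R^{-2}\|u\|_{L^2}^2$, the coercivity inequality \eqref{lower-bou-virial} bounds this below by $8\delta\int|x|^{-b}|\chi_R u|^{\alpha+2}\,dx\ge 8\delta\int_{|x|\le R/2}|x|^{-b}|u|^{\alpha+2}\,dx$.

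Next I would estimate the remaining contributions, all supported in $R\le|x|\le 2R$. The biharmonic term is $\lesssim R^{-2}\int_{|x|\le 2R}|u|^2\lesssim C(u_0,Q)R^{-2}$, and the remaining nonlinear pieces are bounded by $\int_{|x|\ge R}|x|^{-b}|u|^{\alpha+2}$, for which the radial Sobolev inequality $|u(x)|\lesssim|x|^{-(N-1)/2}\|u\|_{L^2}^{1/2}\|\nabla u\|_{L^2}^{1/2}$ gives
\[
\int_{|x|\ge R}|x|^{-b}|u|^{\alpha+2}\,dx\lesssim C(u_0,Q)\,R^{-\frac{(N-1)\alpha+2b}{2}}.
\]
Combining, integrating over $[0,T]$, inserting the boundary bound $|M_{\varphi_R}|\lesssim R$, and adding back the exterior potential energy via the last display yields
\[
\int_0^T\!\!\int|x|^{-b}|u|^{\alpha+2}\,dx\,dt\lesssim R+T\Big(R^{-2}+R^{-\frac{(N-1)\alpha+2b}{2}}\Big).
\]
Balancing $R$ against $TR^{-2}$ gives $R\sim T^{1/3}$, and against $TR^{-((N-1)\alpha+2b)/2}$ gives $R\sim T^{2/((N-1)\alpha+2+2b)}$; taking $T$ large enough that the optimal $R$ exceeds the threshold $R_0$ of Lemma~\ref{lem-coer-2} gives \eqref{mora-est} with the stated $\beta_1$.

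For \eqref{mora-est-I} I would not interpolate the finished bound \eqref{mora-est}, which is lossy, but feed the radial decay directly into the same $R$-dependent inequality. Splitting at radius $R$ and using $|u|^{b/(N-1)}\lesssim|x|^{-b/2}\lesssim R^{b/2}|x|^{-b}$ on $|x|\le R$, together with the radial-Sobolev bound on the exterior, gives, pointwise in $t$,
\[
\int|u|^{\alpha+2+\frac{b}{N-1}}\,dx\lesssim R^{\frac{b}{2}}\int_{|x|\le R}|x|^{-b}|u|^{\alpha+2}\,dx+C(u_0,Q)\,R^{-\frac{(N-1)\alpha+b}{2}}.
\]
Inserting the interval version of the virial bound $\int_I\int_{|x|\le R}|x|^{-b}|u|^{\alpha+2}\lesssim R+|I|\big(R^{-2}+R^{-((N-1)\alpha+2b)/2}\big)$ for the same $R$ and integrating over $I$ collapses everything to
\[
\int_I\!\!\int|u|^{\alpha+2+\frac{b}{N-1}}\,dx\,dt\lesssim R^{1+\frac{b}{2}}+|I|\,R^{-\frac{(N-1)\alpha+b}{2}}+|I|\,R^{\frac{b}{2}-2}.
\]
Optimizing $R$ in $|I|$ ($R\sim|I|^{2/((N-1)\alpha+2+2b)}$ balances the first two terms and $R\sim|I|^{1/3}$ the first and third) produces exactly $\beta_2=\frac{2+b}{2}\beta_1$; for short intervals, where the optimal $R$ would fall below $R_0$, the crude bound $\int_I\|u(t)\|_{L^{\alpha+2+b/(N-1)}}^{\alpha+2+b/(N-1)}\,dt\lesssim|I|\lesssim|I|^{\beta_2}$ closes the estimate.

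I expect the main obstacle to be the interior step: passing from the sharply truncated functional produced by the virial identity to the smoothly cut-off functional to which \eqref{lower-bou-virial} applies, while keeping every commutator, gradient-of-cut-off and annular remainder genuinely of size $R^{-2}$ or of the radial-Sobolev size $R^{-((N-1)\alpha+2b)/2}$. It is precisely the sharpness of these two decay rates that makes the two balancings land on $\beta_1$ and $\beta_2$; any cruder control of the nonlinear term outside $B_R$ would degrade the exponents and endanger the inequality $\beta_1+\beta_2<1$ on which the subsequent scattering argument depends.
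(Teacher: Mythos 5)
Your proposal is correct and follows essentially the same route as the paper's proof: the localized virial identity with $\varphi_R$, discarding the non-negative radial Hessian contribution on the annulus, radial Sobolev control of the exterior and of the cut-off errors, the coercivity of Lemma~\ref{lem-coer-2}, the boundary bound $|M_{\varphi_R}|\lesssim R$, and optimization of $R$ against $T$ (resp.\ $|I|$) with the short-interval Sobolev fallback. The only cosmetic difference is that you convert $|u|^{\alpha+2+\frac{b}{N-1}}$ into $|x|^{-\frac{b}{2}}|u|^{\alpha+2}$ pointwise at the outset via radial Sobolev, whereas the paper first estimates $\int_I\int |x|^{-\frac{b}{2}}|u|^{\alpha+2}\,dx\,dt$ and performs that conversion at the end; the two are identical in substance.
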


	\begin{proof}
		Let $\rho=\rho(u_0,Q)$ be as in \eqref{glo-bou-2}, and $R_0=R_0(\rho,u_0)$ be as in Lemma $\ref{lem-coer-2}$. Let $\varphi_R$ be as in \eqref{def-varphi-R}. By the Cauchy-Schwarz inequality, the conservation of mass and \eqref{glo-bou-1}, we see that
		\begin{align} \label{mora-est-proof}
		|M_{\varphi_R}(t)| \leq \|\nabla \varphi_R\|_{L^\infty} \|u(t)\|_{L^2} \|\nabla u(t)\|_{L^2} \lesssim R
		\end{align}
		for all $t\in \R$. By Lemma $\ref{lem-virial-iden}$ and the fact $\varphi_R(x)=|x|^2$ for $|x| \leq R$, 
		\begin{align*}
		\frac{d}{dt} M_{\varphi_R}(t) &= - \int \Delta^2 \varphi_R |u(t)|^2 dx + 4 \sum_{j,k=1}^N \int \partial^2_{jk} \varphi_R \rea \left( \partial_j \overline{u}(t) \partial_k u(t) \right) dx \\
		& \mathrel{\phantom{= - \int \Delta^2 \varphi_R |u(t)|^2 dx}} - \frac{2\alpha}{\alpha+2} \int |x|^{-b} \Delta \varphi_R |u(t)|^{\alpha+2} dx - \frac{4b}{\alpha+2} \int |x|^{-b-2} x \cdot \nabla \varphi_R |u(t)|^{\alpha+2} dx \\
		&= 8 \left( \int_{|x| \leq R} |\nabla u(t)|^2 dx - \frac{N\alpha+2b}{2(\alpha+2)} \int_{|x| \leq R} |x|^{-b} |u(t)|^{\alpha+2} dx \right) \\
		&\mathrel{\phantom{=}} - \int \Delta^2 \varphi_R |u(t)|^2 dx + 4 \sum_{j,k=1}^N \int_{R \leq |x| \leq 2R} \partial^2_{jk} \varphi_R \rea \left( \partial_j \overline{u}(t) \partial_k u(t) \right) dx \\
		&\mathrel{\phantom{=}} - \frac{2\alpha}{\alpha+2} \int_{R \leq |x| \leq 2R} |x|^{-b} \Delta \varphi_R |u(t)|^{\alpha+2} dx - \frac{4b}{\alpha+2} \int_{R \leq |x| \leq 2R} |x|^{-b-2} x \cdot \nabla \varphi_R |u(t)|^{\alpha+2} dx.
		\end{align*}
		Since $\|\Delta^2 \varphi_R\|_{L^\infty} \lesssim R^{-2}$, the conservation of mass implies
		\[
		\int \Delta^2 \varphi_R |u(t)|^2 dx \lesssim R^{-2}.
		\]
		Since $u$ is radial, we use the fact
		\[
		\partial^2_{jk} = \left(\frac{\delta_{jk}}{r} - \frac{x_j x_k}{r^3} \right) \partial_r + \frac{x_j x_k}{r^2} \partial^2_r
		\]
		to get
		\[
		\sum_{j,k=1}^N \partial^2_{jk} \varphi_R \partial_j \overline{u} \partial_k u = \varphi''_R |\partial_r u|^2 \geq 0
		\]
		which implies
		\[
		\int_{R \leq |x| \leq 2R} \partial^2_{jk} \varphi_R \rea \left( \partial_j \overline{u}(t) \partial_k u(t) \right) dx \geq 0.
		\]
		Since $\|\Delta \varphi_R\|_{L^\infty} \lesssim 1$ and $\|x \cdot \nabla \varphi_R\|_{L^\infty} \lesssim |x|^2$, the radial Sobolev embedding (see e.g. \cite{Strauss}): $N\geq 2$,
		\begin{align} \label{rad-sob-emb}
		\||x|^{\frac{N-1}{2}} f\|_{L^\infty} \lesssim \|f\|_{H^1}, \quad \forall f \in H^1_{\rad}
		\end{align}
		implies that
		\begin{align*}
		\left|\int_{R \leq |x| \leq 2R} \left(|x|^{-b} \Delta \varphi_R + |x|^{-b-2} x\cdot \nabla \varphi_R\right) |u(t)|^{\alpha+2} dx \right| &\lesssim \left(\sup_{|x|\geq R} |x|^{-b} |u(t,x)|^\alpha \right) \|u(t)\|^2_{L^2} \\
		&\lesssim R^{- \frac{(N-1)\alpha}{2} -b} \|u(t)\|^\alpha_{H^1} \|u(t)\|^2_{L^2} \lesssim R^{-\frac{(N-1)\alpha}{2}-b}.
		\end{align*}
		It follows that
		\[
		\frac{d}{dt} M_{\varphi_R}(t) \geq 8 \left(\int_{|x| \leq R} |\nabla u(t)|^2 dx - \frac{N\alpha+2b}{2(\alpha+2)} \int_{|x| \leq R} |x|^{-b} |u(t)|^{\alpha+2} dx \right) + O \left( R^{-2} + R^{-\frac{(N-1)\alpha}{2} -b}\right).
		\]
		On the other hand, let $\chi_R$ be as in Lemma $\ref{lem-coer-2}$. We see that
		\begin{align*}
		\int |\nabla(\chi_R u(t))|^2 dx &= \int \chi^2_R |\nabla u(t)|^2 dx - \int \chi_R \Delta(\chi_R) |u(t)|^2 dx \\
		&= \int_{|x| \leq R} |\nabla u(t)|^2 dx - \int_{R/2 \leq |x| \leq R} (1-\chi^2_R) |\nabla u(t)|^2 dx - \int \chi_R \Delta(\chi_R) |u(t)|^2 dx
		\end{align*}
		and
		\[
		\int |x|^{-b}|\chi_R u(t)|^{\alpha+2} dx = \int_{|x| \leq R} |x|^{-b} |u(t)|^{\alpha+2} dx - \int_{R/2 \leq |x| \leq R} (1-\chi_R^{\alpha+2}) |x|^{-b} |u(t)|^{\alpha+2} dx.
		\]
		It follows that
		\begin{align*}
		\int_{|x| \leq R} |\nabla u(t)|^2 dx &- \frac{N\alpha+2b}{2(\alpha+2)} \int_{|x| \leq R} |x|^{-b} |u(t)|^{\alpha+2} dx \\
		&= \int |\nabla(\chi_R u(t))|^2 dx - \frac{N\alpha+2b}{2(\alpha+2)} \int |x|^{-b} |\chi_R u(t)|^{\alpha+2} dx \\
		&\mathrel{\phantom{=}} + \int_{R/2 \leq |x| \leq R} (1-\chi^2_R) |\nabla u(t)|^2 dx + \int \chi_R \Delta (\chi_R) |u(t)|^2 dx \\
		&\mathrel{\phantom{=}} - \frac{N\alpha+2b}{2(\alpha+2)} \int_{R/2 \leq |x| \leq R} (1-\chi_R^{\alpha+2}) |x|^{-b} |u(t)|^{\alpha+2} dx.
		\end{align*}
		Thanks to the fact $0\leq \chi_R \leq 1$, $\|\Delta(\chi_R)\|_{L^\infty} \lesssim R^{-2}$ and the radial Sobolev embedding, we infer that
		\begin{align*}
		\int_{|x| \leq R} |\nabla u(t)|^2 dx &- \frac{N\alpha+2b}{2(\alpha+2)} \int_{|x| \leq R} |x|^{-b} |u(t)|^{\alpha+2} dx \\
		&\geq \int |\nabla(\chi_R u(t))|^2 dx - \frac{N\alpha+2b}{2(\alpha+2)} \int |x|^{-b} |\chi_R u(t)|^{\alpha+2} dx + O\left(R^{-2} + R^{-\frac{(N-1)\alpha}{2} -b} \right).
		\end{align*}
		We thus obtain
		\[
		\frac{d}{dt} M_{\varphi_R}(t) \geq 8 \left( \int |\nabla(\chi_R u(t))|^2 dx - \frac{N\alpha+2b}{2(\alpha+2)} \int |x|^{-b} |\chi_R u(t)|^{\alpha+2} dx \right) + O\left(R^{-2} + R^{-\frac{(N-1)\alpha}{2} -b} \right).
		\]
		By Lemma $\ref{lem-coer-2}$, there exist $\delta=\delta(\rho)>0$ and $R_0=R_0(\rho,u_0)>0$ such that for any $R\geq R_0$,
		\[
		8 \delta \int |x|^{-b} |\chi_R u(t)|^{\alpha+2} dx \leq \frac{d}{dt} M_{\varphi_R}(t) + O\left(R^{-2} + R^{-\frac{(N-1)\alpha}{2} -b} \right)
		\]
		which, by \eqref{mora-est-proof}, implies 
		\[
		8 \delta \int_0^T \int |x|^{-b} |\chi_R u(t)|^{\alpha+2} dx dt \leq R + O\left(R^{-2} + R^{-\frac{(N-1)\alpha}{2} -b} \right) T.
		\]
		By the definition of $\chi_R$, 
		\begin{align} \label{est-T}
		\int_0^T \int_{|x| \leq R/2} |x|^{-b} |u(t,x)|^{\alpha+2} dx dt \lesssim R + \frac{T}{R^2} + \frac{T}{R^{\frac{(N-1)\alpha}{2} +b}}.
		\end{align}
		On the other hand,
		\[
		\int_{|x|\geq R/2} |x|^{-b} |u(t,x)|^{\alpha+2} dx \leq \left( \sup_{|x|\geq R/2} |x|^{-b} |u(t,x)|^\alpha \right) \|u(t)\|^2_{L^2} \lesssim \frac{1}{R^{\frac{(N-1)\alpha}{2}+b}}.
		\]
		We thus get
		\[
		\int_0^T \int |x|^{-b} |u(t,x)|^{\alpha+2} dx dt \lesssim R + \frac{T}{R^2} + \frac{T}{R^{\frac{(N-1)\alpha}{2} +b}}
		\]
		which proves \eqref{mora-est} by taking 
		\[
		R=T^{\frac{1}{1+ \min \left\{2, \frac{(N-1)\alpha}{2} +b \right\}}} = T^{\beta_1}.
		\]
		As in \eqref{est-T}, we also have for any interval $I \subset \R$,
		\[
		\int_I \int_{|x|\leq R/2} |x|^{-b} |u(t,x)|^{\alpha+2} dx dt \lesssim R + \frac{|I|}{R^2} + \frac{|I|}{R^{\frac{(N-1)\alpha}{2}+b}}
		\]
		hence
		\[
		\int_I \int_{|x| \leq R/2} |x|^{-\frac{b}{2}} |u(t,x)|^{\alpha+2} dx dt \lesssim R^{\frac{b}{2}} \int_I \int_{|x|\leq R/2} |x|^{-b} |u(t,x)|^{\alpha+2} dx dt \lesssim R^{1+\frac{b}{2}} + \frac{|I|}{R^{2-\frac{b}{2}}} + \frac{|I|}{R^{\frac{(N-1)\alpha+b}{2}}}.
		\]
		We also have
		\[
		\int_{|x|\geq R/2} |x|^{-\frac{b}{2}} |u(t,x)|^{\alpha+2} dx dt 
		\lesssim \frac{1}{R^{\frac{(N-1)\alpha+b}{2}}}.
		\]
		It follows that
		\[
		\int_I \int |x|^{-\frac{b}{2}} |u(t,x)|^{\alpha+2} dx dt \lesssim R^{1+\frac{b}{2}} + \frac{|I|}{R^\sigma},
		\]
		where 
		\[
		\sigma:= \min \left\{2-\frac{b}{2}, \frac{(N-1)\alpha+b}{2} \right\}.
		\]
		Taking $R=|I|^{\frac{1}{1+\frac{b}{2}+\sigma}}$, we get for $|I|$ sufficiently large,
		\[
		\int_I \int |x|^{-\frac{b}{2}} |u(t,x)|^{\alpha+2} dx dt \lesssim |I|^{\frac{1+\frac{b}{2}}{1+\frac{b}{2}+\sigma}} = |I|^{\beta_2}
		\]
		with $\beta_2$ as in \eqref{mora-est-I}. By the radial Sobolev embedding \eqref{rad-sob-emb}, 
		\begin{align*}
		\int_I \|u(t)\|^{\alpha+2+\frac{b}{N-1}}_{L^{\alpha+2+\frac{b}{N-1}}} dt &= \int_I \int \left( |x|^{\frac{N-1}{2}} |u(t,x)|\right)^{\frac{b}{N-1}} |x|^{-\frac{b}{2}} |u(t,x)|^{\alpha+2} dx dt \\
		&\lesssim \int_I \|u(t)\|^{\frac{b}{N-1}}_{H^1} |x|^{-\frac{b}{2}}|u(t,x)|^{\alpha+2} dx dt \lesssim |I|^{\beta_2}
		\end{align*}
		which proves \eqref{mora-est-I} for $|I|$ sufficiently large. In the case $|I|$ is sufficiently small, it follows from the Sobolev embedding \footnote{It is easy to check that $\alpha+2 + \frac{b}{N-1} \leq \frac{2N}{N-2}$ if $N\geq 3$ since $\alpha < \frac{4-2b}{N-2}$.} $\|u(t)\|_{L^{\alpha+2+\frac{b}{N-1}}} \lesssim \|u(t)\|_{H^1}$ that
		\[
		\int_I \|u(t)\|^{\alpha+2+\frac{b}{N-1}}_{L^{\alpha+2+\frac{b}{N-1}}} dt \lesssim |I| \lesssim |I|^{\beta_2}
		\]
		since $\beta_2<1$. The proof is complete.
	\end{proof}

	\begin{corollary} \label{coro-mora-est}
		Let $N\geq 2$, $0<b<2$ and $2_*<\alpha<2^*$. Let $u_0 \in H^1$ be radially symmetric. Then for any $T>0$ sufficiently large, the corresponding global solution to the defocusing problem \eqref{INLS} satisfies
		\[
		\int_0^T \int |x|^{-b} |u(t,x)|^{\alpha+2} dx dt \leq C(u_0) T^{\beta_1}
		\]
		for some constant $C(u_0)$ depending only on the mass and energy of the initial data $u_0$, where $\beta_1$ is as in \eqref{mora-est}. Moroever, for any interval $I \subset \R$,
		\begin{align} \label{mora-est-I-defocus}
		\int_I \|u(t)\|^{\alpha+2+\frac{b}{N-1}}_{L^{\alpha+2+\frac{b}{N-1}}} dt \leq C(u_0) |I|^{\beta_2},
		\end{align}
		where $\beta_2$ is given in \eqref{mora-est-I}.
	\end{corollary}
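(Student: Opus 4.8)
The plan is to run the argument of Proposition \ref{prop-mora-est} with the plus sign in front of the nonlinearity, which produces two genuine simplifications. First I would record the a priori bound. In the defocusing case the conserved energy is
\[
E(u_0) = \frac{1}{2}\|\nabla u(t)\|_{L^2}^2 + \frac{1}{\alpha+2}\int |x|^{-b} |u(t,x)|^{\alpha+2}\,dx,
\]
a sum of two nonnegative terms, so that $\|\nabla u(t)\|_{L^2}^2 \le 2E(u_0)$ throughout the lifespan. Combined with mass conservation, this gives a uniform bound $\|u(t)\|_{H^1} \le C(u_0)$ depending only on $M(u_0)$ and $E(u_0)$; in particular the solution is global, and every constant below will depend only on the mass and energy. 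This bound plays the role that Lemma \ref{lem-coer-1} played in the focusing case, and the hypotheses \eqref{con-energy}--\eqref{con-grad} are now unnecessary.

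Next I would apply Lemma \ref{lem-virial-iden} with the defocusing sign. Using $\varphi_R(x)=|x|^2$ on $\{|x|\le R\}$ exactly as before, the interior part of the computation becomes
\[
\frac{d}{dt} M_{\varphi_R}(t) = 8\left( \int_{|x|\le R} |\nabla u(t)|^2\,dx + \frac{N\alpha+2b}{2(\alpha+2)} \int_{|x|\le R} |x|^{-b} |u(t,x)|^{\alpha+2}\,dx \right) + (\text{boundary terms}),
\]
the only change from Proposition \ref{prop-mora-est} being that the sign in front of the nonlinear term has flipped. The crucial point is that the interior expression is now a sum of two nonnegative quantities, so the variational coercivity of Lemma \ref{lem-coer-2} and the cutoff $\chi_R$ are no longer needed: the target integrand $\int_{|x|\le R} |x|^{-b}|u|^{\alpha+2}$ is bounded below directly. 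The boundary terms are treated verbatim as in Proposition \ref{prop-mora-est}: the $\Delta^2 \varphi_R$ term is $O(R^{-2})$ by mass conservation, the $\partial^2_{jk}\varphi_R$ term is nonnegative by radiality, and the nonlinear annular terms are $O(R^{-\frac{(N-1)\alpha}{2}-b})$ by the radial Sobolev embedding \eqref{rad-sob-emb} and the uniform $H^1$ bound.

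This yields the lower bound
\[
\frac{d}{dt} M_{\varphi_R}(t) \gtrsim \int_{|x|\le R} |x|^{-b} |u(t,x)|^{\alpha+2}\,dx + O\!\left( R^{-2} + R^{-\frac{(N-1)\alpha}{2}-b} \right).
\]
Integrating in time and using $|M_{\varphi_R}(t)| \lesssim R$ (again from the uniform $H^1$ bound), then adding the exterior estimate $\int_{|x|\ge R/2}|x|^{-b}|u|^{\alpha+2}\lesssim R^{-\frac{(N-1)\alpha}{2}-b}$ and optimizing at $R = T^{\beta_1}$, gives the first inequality with $\beta_1$ as in \eqref{mora-est}. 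The bound \eqref{mora-est-I-defocus} then follows exactly as in Proposition \ref{prop-mora-est}: one inserts the weight $|x|^{-b/2}$, trades a factor $|x|^{b/2}$ against the radial decay to reach the $L^{\alpha+2+\frac{b}{N-1}}$ norm through \eqref{rad-sob-emb}, optimizes $R$ in $|I|$ for large $|I|$, and handles small $|I|$ via the Sobolev embedding $\|u(t)\|_{L^{\alpha+2+\frac{b}{N-1}}} \lesssim \|u(t)\|_{H^1}$, which is admissible because $\beta_2<1$.

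Since the entire scheme is a transcription of Proposition \ref{prop-mora-est} with the sign reversed, I expect no serious obstacle. The one step requiring genuine care is the sign bookkeeping in Lemma \ref{lem-virial-iden}: one must confirm that the defocusing interior term is manifestly nonnegative, which is precisely what lets the proof dispense with the variational input of Lemmas \ref{lem-coer-1} and \ref{lem-coer-2}.
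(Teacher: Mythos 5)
Your proposal is correct and follows essentially the same route as the paper: exploit the sign of the defocusing nonlinearity so that the interior virial term is manifestly nonnegative (making Lemmas \ref{lem-coer-1} and \ref{lem-coer-2} unnecessary, with the uniform $H^1$ bound coming from conservation of mass and energy), treat the boundary terms exactly as in Proposition \ref{prop-mora-est}, and then integrate and optimize in $R$. The only cosmetic difference is that the paper reinserts the cutoff $\chi_R$ purely to quote the final steps of Proposition \ref{prop-mora-est} verbatim, whereas you observe, correctly, that one may work with $\int_{|x|\le R}|x|^{-b}|u|^{\alpha+2}\,dx$ directly.
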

		
	\begin{proof}
		The proof is similar to the one of Proposition $\ref{prop-mora-est}$. We only point out the differences. We first have
		\begin{align*}
		\frac{d}{dt} M_{\varphi_R}(t) &= - \int \Delta^2 \varphi_R |u(t)|^2 dx + 4 \sum_{j,k=1}^N \int \partial^2_{jk} \varphi_R \rea \left( \partial_j \overline{u}(t) \partial_k u(t) \right) dx \\
		& \mathrel{\phantom{= - \int \Delta^2 \varphi_R |u(t)|^2 dx}} + \frac{2\alpha}{\alpha+2} \int |x|^{-b} \Delta \varphi_R |u|^{\alpha+2} dx + \frac{4b}{\alpha+2} \int |x|^{-b-2} x \cdot \nabla \varphi_R |u|^{\alpha+2} dx \\
		&= 8 \left( \int_{|x| \leq R} |\nabla u(t)|^2 dx + \frac{N\alpha+2b}{2(\alpha+2)} \int_{|x| \leq R} |x|^{-b} |u(t)|^{\alpha+2} dx \right) \\
		&\mathrel{\phantom{=}} - \int \Delta^2 \varphi_R |u(t)|^2 dx + 4 \sum_{j,k=1}^N \int_{R \leq |x| \leq 2R} \partial^2_{jk} \varphi_R \rea \left( \partial_j \overline{u}(t) \partial_k u(t) \right) dx \\
		&\mathrel{\phantom{=}} + \frac{2\alpha}{\alpha+2} \int_{R \leq |x| \leq 2R} |x|^{-b} \Delta \varphi_R |u(t)|^{\alpha+2} dx + \frac{4b}{\alpha+2} \int_{R \leq |x| \leq 2R} |x|^{-b-2} x \cdot \nabla \varphi_R |u(t)|^{\alpha+2} dx.
		\end{align*}
		Estimating as above, we get
		\begin{align*}
		\frac{d}{dt} M_{\varphi_R}(t) &\geq 8 \left(\int_{|x| \leq R} |\nabla u(t)|^2 dx + \frac{N\alpha+2b}{2(\alpha+2)} \int_{|x| \leq R} |x|^{-b} |u(t)|^{\alpha+2} dx \right) + O \left( R^{-2} + R^{-\frac{(N-1)\alpha}{2} -b}\right) \\
		&\geq \frac{N\alpha+2b}{2(\alpha+2)} \int_{|x| \leq R} |x|^{-b} |u(t)|^{\alpha+2} dx + O \left( R^{-2} + R^{-\frac{(N-1)\alpha}{2} -b}\right).
		\end{align*}
		Using the fact $0 \leq \chi_R \leq 1$,
		\[
		\int |x|^{-b}|\chi_R u(t)|^{\alpha+2} dx = \int_{|x| \leq R} |x|^{-b} |u(t)|^{\alpha+2} dx - \int_{R/2 \leq |x| \leq R} (1-\chi_R^{\alpha+2}) |x|^{-b} |u(t)|^{\alpha+2} dx,
		\]
		and $\|\Delta(\chi_R)\|_{L^\infty} \lesssim R^{-2}$, the radial Sobolev embedding implies
		\[
		\frac{d}{dt} M_{\varphi_R}(t) \geq \frac{N\alpha+2b}{2(\alpha+2)} \int_{|x| \leq R} |x|^{-b} |\chi_R u(t)|^{\alpha+2} dx + O \left( R^{-2} + R^{-\frac{(N-1)\alpha}{2} -b}\right).
		\]
		Repeating the same reasoning as in the proof of Proposition $\ref{prop-mora-est}$, we complete the proof.
	\end{proof}

\section{Energy scattering in two dimensions}
\label{S3}
\setcounter{equation}{0}
In this section, we give the proof of the energy scattering in two dimensions given in Theorem $\ref{theo-scat-2D-focus}$ and Theorem $\ref{theo-scat-2D-defocus}$. Let us start with the following nonlinear estimates.
\begin{lemma} \label{lem-non-est}
	Let $N=2$, $0<b<1$, $\alpha>2-b$, $0 \leq \gamma \leq 1$ and $I\subset \R$. Then there exists $\theta \in (0,1)$ satisfying $\alpha \theta>1$ such that
	\begin{align} \label{non-est}
	\||\nabla|^\gamma(|x|^{-b} |u|^\alpha u)\|_{S'(L^2,I)} \lesssim \|u\|^{\alpha\theta}_{L^{\alpha+2+b}(I\times \R^2)} \|u\|^{\alpha(1-\theta)}_{L^\infty(I, L^{\infty-})} \||\nabla|^\gamma u\|_{L^\infty(I,L^2)},
	\end{align}
	where $\infty-:=\frac{1}{\epsilon}$ for some $0<\epsilon \ll 1$. In particular,
	\begin{align} \label{non-est-sob}
	\||\nabla|^\gamma(|x|^{-b} |u|^\alpha u)\|_{S'(L^2,I)} \lesssim \|u\|^{\alpha\theta}_{L^{\alpha+2+b}(I\times \R^2)} \|\scal{\nabla} u\|^{\alpha(1-\theta)}_{L^\infty(I, L^2)} \||\nabla|^\gamma u\|_{L^\infty(I,L^2)}.
	\end{align}
\end{lemma}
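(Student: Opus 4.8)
The plan is to bound the dual Strichartz norm on the left of \eqref{non-est} by exhibiting one convenient exponent pair and estimating the corresponding $L^{q'}(I,L^{r'})$ norm of $|\nabla|^\gamma(|x|^{-b}|u|^\alpha u)$. Because we are in two dimensions the endpoint pair $(2,\infty)$ is forbidden, and the role of the norm $L^{\infty-}=L^{1/\epsilon}$ is precisely to push the spatial exponent slightly off that endpoint and land on a usable pair; the small parameter $\epsilon$ will be taken as small as the bookkeeping requires. The power $\alpha\theta$ attached to $\|u\|_{L^{\alpha+2+b}(I\times\R^2)}$ is then dictated by matching the time integrability: since every other factor on the right is measured in $L^\infty$ in time, H\"older in $t$ forces the time exponent to be determined by $\frac{\alpha\theta}{\alpha+2+b}$, which fixes the pair once $\theta$ is chosen.

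First I would move the fractional derivative through the nonlinearity. Using the fractional product rule together with the fractional chain rule for $z\mapsto|z|^\alpha z$ (legitimate since $\alpha>2-b>1$, so the nonlinearity is sufficiently smooth), one reduces matters to two kinds of terms: the main term $|x|^{-b}|u|^\alpha|\nabla|^\gamma u$, in which the derivative falls on $u$, and an error term behaving like $|x|^{-b-\gamma}|u|^\alpha u$, in which the derivative falls on the weight. Both are then handled by H\"older in space. For the spatial H\"older I would split $\R^2$ into the unit ball and its complement. On $\{|x|>1\}$ the weights $|x|^{-b}$ and $|x|^{-b-\gamma}$ are bounded (and, when extra decay is needed, the radial Sobolev embedding \eqref{rad-sob-emb} supplies it), so the term is treated as for the homogeneous nonlinearity, distributing the $\alpha+1$ copies of $u$ as $\alpha\theta$ copies in $L^{\alpha+2+b}$, $\alpha(1-\theta)$ copies in $L^{\infty-}$, and one copy of $|\nabla|^\gamma u$ in $L^2$. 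On $\{|x|\le1\}$ the weights are singular but locally integrable to high order: one has $|x|^{-b}\in L^{2/b-}$ and, since $b+\gamma<2$ when $0<b<1$ and $0\le\gamma\le1$, also $|x|^{-b-\gamma}\in L^{2/(b+\gamma)-}$, and the abundant $L^{\infty-}$ room absorbs the remaining factors of $u$.

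Collecting exponents, the H\"older and Strichartz bookkeeping closes provided $\theta$ satisfies a pair of constraints coming from (i) admissibility/acceptability of the chosen pair and (ii) local integrability of the singular weight near the origin. The key point is that these constraints leave a nonempty interval of $\theta\in(0,1)$ exactly because $0<b<1$ and $\alpha>2-b$, and that this interval can be arranged to contain values with $\alpha\theta>1$. Once \eqref{non-est} is established, the consequence \eqref{non-est-sob} is immediate from the two–dimensional Sobolev embedding $\|u\|_{L^{\infty-}}=\|u\|_{L^{1/\epsilon}}\lesssim\|\scal{\nabla}u\|_{L^2}$ (valid for every finite exponent in $\R^2$) together with $\||\nabla|^\gamma u\|_{L^2}\le\|\scal{\nabla}u\|_{L^2}$.

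The main obstacle I expect is the interaction of the singular weight with the fractional derivative: one must make sense of $|\nabla|^\gamma(|x|^{-b}\,\cdot\,)$ and control the resulting $|x|^{-b-\gamma}$ singularity, while simultaneously keeping all H\"older exponents in a range for which the Strichartz estimate is genuinely available in two dimensions, where the forbidden endpoint $(2,\infty)$ is what forces the use of $L^{\infty-}$. Checking that the exponent conditions are simultaneously solvable with $\theta\in(0,1)$ and $\alpha\theta>1$ — which is exactly where the hypotheses $0<b<1$ and $\alpha>2-b$ are used — is the delicate part of the argument.
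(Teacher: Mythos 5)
Your overall architecture matches the paper's proof: split the derivative between the weight and the nonlinearity (producing the $|x|^{-b}$-term and the $|x|^{-b-\gamma}$-error term), split $\R^2$ into the unit ball $B$ and its complement $B^c$, and close by H\"older plus interpolation between $L^{\alpha+2+b}_{t,x}$ and $L^\infty_t L^{\infty-}_x$. Your treatment of the interior region (weight in $L^{2/b-}(B)$, resp.\ $L^{2/(b+\gamma)-}(B)$) is exactly the paper's. However, your treatment of the exterior region has a genuine gap. You propose to use only the \emph{boundedness} of $|x|^{-b}$ on $B^c$ and then treat that piece ``as for the homogeneous nonlinearity.'' This cannot close in the claimed range of $\alpha$, for a scaling reason: whatever dual pair $(m',n')$ is fed into the Strichartz estimate in two dimensions, scaling forces $\frac{2}{m'}+\frac{2}{n'}=3$, and with your distribution of factors this identity reads
\begin{equation*}
\frac{2}{\nu}+\frac{4\alpha\theta}{\alpha+2+b}+2\alpha(1-\theta)\epsilon=2,
\end{equation*}
where $L^\nu$ is the space carrying the weight. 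Taking the weight merely bounded on $B^c$ means $\frac{2}{\nu}=0$ there, and letting $\epsilon\to 0$ forces $\theta\to\frac{\alpha+2+b}{2\alpha}$, which is $\geq 1$ whenever $\alpha\leq 2+b$. So for $2-b<\alpha\leq 2+b$ --- a nonempty range, and precisely the range that makes the lemma (and the paper) interesting --- no valid $\theta\in(0,1)$ exists with your exterior bookkeeping. The paper avoids this by exploiting the \emph{decay} of the weight at infinity: $|x|^{-b}\in L^\nu(B^c)$ for every $\nu>2/b$, so one takes $\frac{2}{\nu}=b-\eta$ on $B^c$ (mirroring $\frac{2}{\nu}=b+\eta$ on $B$). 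Then the weight contributes essentially the same amount $b$ to the scaling identity on both regions, giving $\theta\approx\frac{(\alpha+2+b)(2-b)}{4\alpha}$, which lies in $(0,1)$ with $\alpha\theta>1$ exactly under the hypotheses $0<b<1$ and $\alpha>2-b$.

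Your parenthetical fallback --- ``when extra decay is needed, the radial Sobolev embedding supplies it'' --- does not repair this. The lemma is a pure nonlinear estimate with no radial symmetry hypothesis on $u$, so \eqref{rad-sob-emb} is unavailable; and even for radial $u$, replacing copies of $u$ by $|x|^{-1/2}\|u\|_{H^1}$ on $B^c$ would produce $\|u\|_{H^1}$ factors in place of the $L^\infty_t L^{\infty-}_x$ norm, i.e.\ at best a radial variant of \eqref{non-est-sob}, not \eqref{non-est} as stated. The fix is simply to treat $B^c$ exactly as you treated $B$, with the integrability condition $\frac{2}{\nu}>b$ replaced by $\frac{2}{\nu}<b$ (resp.\ $\frac{2}{\nu}<b+\gamma$ for the error term), which is what the paper does.
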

	
\begin{proof}
	We bound
	\[
	\||\nabla|^\gamma(|x|^{-b} |u|^\alpha u)\|_{S'(L^2,I)} \lesssim \||x|^{-b} |\nabla|^\gamma(|u|^\alpha u)\|_{S'(L^2,I)} + \||x|^{-b-\gamma} |u|^\alpha u\|_{S'(L^2,I)},
	\]
	where we have used the fact $|\nabla|^\gamma(|x|^{-b}) = C(\gamma) |x|^{-b-\gamma}$. Let us first estimate
	\[
	\||x|^{-b} |\nabla|^\gamma(|u|^\alpha u)\|_{S'(L^2,I)} \leq \||x|^{-b} |\nabla|^\gamma(|u|^\alpha u)\|_{L^{m'}(I,L^{n'}(B))} + \||x|^{-b} |\nabla|^\gamma(|u|^\alpha u)\|_{L^{m'}(I,L^{n'}(B^c))}
	\]
	for some Schr\"odinger admissible pair $(m,n)$, where $B$ and $B^c$ are the unit ball and its complement in $\R^2$ respectively. We estimate
	\begin{align} \label{non-est-B}
	\begin{aligned}
	\||x|^{-b} |\nabla|^\gamma (|u|^\alpha u)\|_{L^{m'}(I, L^{n'}(B))} &\leq \||x|^{-b}\|_{L^{\nu}(B)} \||\nabla|^\gamma(|u|^\alpha u)\|_{L^{m'}(I, L^\rho)} \\
	&\lesssim \|u\|^\alpha_{L^{\alpha q}(I, L^{\alpha r})} \||\nabla|^\gamma u\|_{L^\infty(I, L^2)} \\
	&\lesssim \|u\|^{\alpha \theta}_{L^{\alpha+2+b}(I\times \R^2)} \|u\|^{\alpha(1-\theta)}_{L^\infty(I, L^{\infty-})} \||\nabla|^\gamma u\|_{L^\infty(I, L^2)}
	\end{aligned}
	\end{align}
	provided that $\nu, \rho, q, r\geq 1$ satisfy
	\begin{align*}
	\frac{1}{n'} &=\frac{1}{\nu}+\frac{1}{\rho}, & \frac{1}{m'} &= \frac{1}{q} +\frac{1}{\infty}, & \frac{1}{\rho} &= \frac{1}{r}+\frac{1}{2} \\
	\frac{2}{\nu} &>b, & \frac{1}{\alpha q} &= \frac{\theta}{\alpha+2+b} +\frac{1-\theta}{\infty}, & \frac{1}{\alpha r} &=\frac{\theta}{\alpha+2+b} +\frac{1-\theta}{\infty-}.
	\end{align*}
	Note that the condition $\frac{2}{\nu} >b$ ensures $\||x|^{-b}\|_{L^{\nu}} <\infty$. It follows that
	\[
	\frac{1}{m'} = \frac{1}{q}, \quad \frac{1}{n'} = \frac{1}{\nu}+\frac{1}{r}+\frac{1}{2}.
	\]
	Since $\frac{2}{m'}+\frac{2}{n'}=3$, we infer that
	\[
	\frac{2}{\nu} +\frac{4\alpha \theta}{\alpha+2+b} + \frac{2\alpha(1-\theta)}{\infty-} =2.
	\]
	We next take $\frac{2}{\nu}=b+\eta$ for some $\eta>0$ to be chosen shortly and $\infty- = \frac{1}{\epsilon}$ with $0<\epsilon \ll 1$ to get
	\[
	b+\eta + \frac{4\alpha\theta}{\alpha+2+b} + 2\alpha(1-\theta)\epsilon =2
	\]
	or
	\[
	\theta=\theta(\epsilon)= \frac{1}{\frac{4\alpha}{\alpha+2+b} - 2\alpha \epsilon} \left( 2 - b-\eta - 2\alpha \epsilon \right).
	\]
	We see that
	\[
	\theta_0:=\lim_{\epsilon \rightarrow 0} \theta(\epsilon) = \frac{\alpha+2+b}{4\alpha} (2-b-\eta).
	\]
	Since $\alpha>2-b$, by taking $\eta>0$ sufficiently small, it is easy to check that $\theta_0 \in (0,1)$. Moreover, since $0<b<1$ and $\alpha>2-b$, we have that
	\[
	\alpha \theta_0 = \frac{\alpha+2+b}{4}(2-b-\eta)>1
	\]
	provided $\eta>0$ is chosen small enough. Therefore, the estimates in \eqref{non-est-B} are available with some $\theta \in (0,1)$ satisfying $\alpha \theta>1$ by taking $\epsilon, \eta>0$ sufficiently small. The term on $B^c$ is treated similarly by replacing the condition $\frac{2}{\nu}>b$ by $\frac{2}{\nu}<b$. In this case, we just take $\frac{2}{\nu} =b-\eta$ for some $\eta>0$ small enough.
	
	We next estimate
	\[
	\||x|^{-b-\gamma} |u|^\alpha u\|_{S'(L^2,I)} \leq \||x|^{-b-\gamma} |u|^\alpha u\|_{L^{m'}(I, L^{n'}(B))} + \||x|^{-b-\gamma} |u|^\alpha u\|_{L^{m'}(I, L^{n'}(B^c))}
	\]
	for some $(m,n) \in S$. By H\"older's inequality,
	\begin{align*}
	\||x|^{-b-\gamma} |u|^\alpha u\|_{L^{m'}(I,L^{n'}(B))} &\leq \||x|^{-b-\gamma} \|_{L^{\nu}(B)} \||u|^\alpha u\|_{L^{m'}(I,L^\rho)} \\
	&\lesssim \|u\|^\alpha_{L^{\alpha q}(I, L^{\alpha r})} \|u\|_{L^\infty(I, L^\kappa)} \\
	&\lesssim \|u\|^{\alpha \theta}_{L^{\alpha+2+b}(I\times \R^2)} \|u\|^{\alpha(1-\theta)}_{L^\infty(I, L^{\infty-})} \||\nabla|^\gamma u\|_{L^\infty(I, L^2)}
	\end{align*}
	provided that $\nu, \rho, q, r, \kappa\geq 1$ satisfy
	\begin{align*}
	\frac{1}{n'} &=\frac{1}{\nu} + \frac{1}{\rho}, & \frac{1}{m'} &=\frac{1}{q}+\frac{1}{\infty}, &\frac{1}{\rho} &=\frac{1}{r}+\frac{1}{\kappa}, \\
	\frac{2}{\nu} &>b+\gamma, & \frac{1}{\alpha q} &=\frac{\theta}{\alpha+2+b} +\frac{1-\theta}{\infty}, &\frac{1}{\alpha r} &=\frac{\theta}{\alpha+2+b} +\frac{1-\theta}{\infty-}
	\end{align*}
	and $\frac{1}{\kappa} =\frac{1}{2} -\frac{\gamma}{2}$ which comes from the homogeneous Sobolev embedding. Since $\frac{2}{m'}+\frac{2}{n'}=3$, we see that
	\begin{align*}
	\frac{2}{\nu}-\gamma +\frac{4\alpha\theta}{\alpha+2+b} + \frac{2\alpha(1-\theta)}{\infty-}=2.
	\end{align*}
	We take $\frac{2}{\nu}= b+\gamma+\eta$ for some $\eta>0$ to be chosen shortly and $\infty-=\frac{1}{\epsilon}$ with $0<\epsilon \ll 1$ to get
	\[
	b-\eta+\frac{4\alpha\theta}{\alpha+2+b} + 2\alpha(1-\theta)\epsilon =2.
	\]
	It follows that
	\[
	\theta = \theta(\epsilon) = \frac{1}{\frac{4\alpha}{\alpha+2+b}-2\alpha \epsilon} \left(2 -b-\eta-2\alpha \epsilon\right).
	\]
	By the same argument as above, we prove \eqref{non-est}. The estimate \eqref{non-est-sob} follows from \eqref{non-est} and Sobolev embeddings.
\end{proof}

\begin{proposition} \label{prop-glo-Lp-bou}
	Let $N=2$, $0<b<1$ and $\alpha>2-b$. Let $u_0 \in H^1$ be radially symmetric and satisfy \eqref{con-energy} and \eqref{con-grad}. Then the corresponding global solution to the focusing problem \eqref{INLS} satisfies
	\begin{align} \label{glo-Lp-bou}
	\|u\|_{L^{\alpha+2+b}(\R \times \R^2)} \leq C(u_0, Q)<\infty.
	\end{align}
\end{proposition}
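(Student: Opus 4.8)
The plan is to follow the Arora--Dodson--Murphy scheme, upgrading the subcritical spacetime growth provided by Proposition~\ref{prop-mora-est} to a finite global bound on the scattering norm. Throughout write $p:=\alpha+2+b$; since $\alpha>2-b$ we have $p>4$, and with $N=2$ the estimate \eqref{mora-est-I} reads $\int_I\|u(t)\|^p_{L^p}\,dt\le C(u_0,Q)|I|^{\beta_2}$. The focusing equation is invariant under $u(t,x)\mapsto\overline{u(-t,x)}$, which preserves radiality and the hypotheses \eqref{con-energy}--\eqref{con-grad}, so it suffices to bound $\|u\|_{L^p([0,\infty)\times\R^2)}$. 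From Lemma~\ref{lem-coer-1} (global existence together with \eqref{glo-bou-1}) and conservation of mass one records the uniform bound $\sup_{t\in\R}\|\scal{\nabla}u(t)\|_{L^2}\le C(u_0,Q)$, which is used freely to absorb the factors $\|\scal{\nabla}u\|_{L^\infty(I,L^2)}$ appearing in Lemma~\ref{lem-non-est}.

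\emph{Good local theory.} First I would show there is $\eta_0>0$ such that, on any interval $I=[T,S]$ on which the linear driving term $\|e^{i(t-T)\Delta}u(T)\|_{L^p(I\times\R^2)}$ is $\le\eta_0$, one has $\|u\|_{L^p(I\times\R^2)}\le 2\eta_0$ together with $\|\scal{\nabla}u\|_{S(L^2,I)}\le C(u_0,Q)$. This is a standard continuity argument: writing the Duhamel formula from $T$, applying the Strichartz estimates and Lemma~\ref{lem-non-est} with $\gamma=0,1$, and using the uniform $H^1$ bound, the nonlinear contribution is controlled by $\|u\|^{\alpha\theta}_{L^p(I)}$ times a fixed constant; since $\alpha\theta>1$ this term is superlinearly small and can be bootstrapped. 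The argument is legitimate because $\|u\|_{L^p(I)}$ is a priori finite on bounded $I$ by \eqref{mora-est-I}. The upshot is a clean reduction: it suffices to produce one time $T_0$ with
\[
\|e^{i(t-T_0)\Delta}u(T_0)\|_{L^p([T_0,\infty)\times\R^2)}\le\eta_0,
\]
since then $\|u\|_{L^p([T_0,\infty))}<\infty$ by the above, while $\|u\|_{L^p([0,T_0])}<\infty$ follows directly from \eqref{mora-est-I}.

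\emph{The driving term.} To estimate $L(T):=\|e^{i(t-T)\Delta}u(T)\|_{L^p([T,\infty))}$ I would substitute the Duhamel formula from $0$, namely $e^{i(t-T)\Delta}u(T)=e^{it\Delta}u_0 - i\int_0^T e^{i(t-s)\Delta}(\pm|x|^{-b}|u|^\alpha u)(s)\,ds$. The free part contributes the tail $\|e^{it\Delta}u_0\|_{L^p([T,\infty))}$, which tends to $0$ as $T\to\infty$ because $e^{it\Delta}u_0\in L^p(\R\times\R^2)$ for $u_0\in H^1$ (Strichartz together with Sobolev embedding). For the forcing part one splits the time integration into a far regime, where the dispersive estimate $\|e^{i\tau\Delta}g\|_{L^p_x}\lesssim|\tau|^{-(1-2/p)}\|g\|_{L^{p'}_x}$ applies with a large gap $|\tau|=|t-s|$, and a near-diagonal short window $s\approx t$, where dispersion is unavailable and the contribution is instead controlled by the local Strichartz estimate from the previous step. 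In the far regime the weight $|x|^{-b}$ in the nonlinearity and the radial Sobolev embedding \eqref{rad-sob-emb} let me bound $\|(|x|^{-b}|u|^\alpha u)(s)\|_{L^{p'}_x}$ by a product of a power of the density $\int|x|^{-b}|u(s)|^{\alpha+2}\,dx$ and a power of $\|u(s)\|^p_{L^p}$ (times the uniform $H^1$ norm); integrating in $s$ and $t$ and invoking the two Morawetz bounds \eqref{mora-est} and \eqref{mora-est-I} produces a net power $T^{\beta_1+\beta_2-1}$, which vanishes as $T\to\infty$ precisely because $\beta_1+\beta_2<1$. This yields $L(T_0)\le\eta_0$ for $T_0$ large, closing the reduction; organizing the far regime dyadically in $t$ additionally shows the pieces are summable.

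\emph{Main obstacle.} The crux is the bookkeeping in the forcing estimate: choosing the H\"older exponents so that $\|(|x|^{-b}|u|^\alpha u)(s)\|_{L^{p'}_x}$ splits cleanly into the two Morawetz densities, and then integrating against the dispersive kernel $|t-s|^{-(1-2/p)}$ so that the time integral converges (this uses $p>4$, hence $(1-2/p)p>1$) while the growth exponents of \eqref{mora-est} and \eqref{mora-est-I} combine to exactly $\beta_1+\beta_2-1<0$. The companion difficulty is controlling the near-diagonal region $s\approx t$ without reintroducing the $|I|^{\beta_2/p}$ growth of $\|u\|_{L^p}$ over long windows; this is exactly where the superlinearity $\alpha\theta>1$ furnished by Lemma~\ref{lem-non-est} is essential, letting the local theory absorb that region interval by interval.
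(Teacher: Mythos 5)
Your skeleton matches the paper's implementation of the Arora--Dodson--Murphy scheme: a bootstrap local theory built on Lemma~\ref{lem-non-est} and $\alpha\theta>1$, a reduction to making the driving term $\|e^{i(t-T_0)\Delta}u(T_0)\|_{L^{\alpha+2+b}([T_0,\infty)\times\R^2)}$ small at one well-chosen time, and a far/near splitting of $\int_0^{T_0}e^{i(t-s)\Delta}(|x|^{-b}|u|^\alpha u)(s)\,ds$ handled by the dispersive estimate plus the Morawetz bounds. (Your single-time reduction, with \eqref{mora-est-I} supplying finiteness on $[0,T_0]$, is a legitimate and slightly cleaner variant of the paper's decomposition of $\R$ into intervals where $e^{it\Delta}u_0$ is small.) However, there is a genuine gap at the near-diagonal window, and your proposed fix does not close it. Write $p=\alpha+2+b$ and let $\delta$ be the length of the near window $[T_0-\delta,T_0]$. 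The far region forces $\delta$ to grow with $T_0$: the dispersive decay across the gap must beat the growth $T_0^{\beta_1}$ of \eqref{mora-est}, so you need (roughly) $\delta^{-1}T_0^{\beta_1}\to 0$. But then the only information \eqref{mora-est-I} gives on the long near window is $\|u\|^p_{L^p([T_0-\delta,T_0]\times\R^2)}\lesssim \delta^{\beta_2}$, so the Strichartz/Lemma~\ref{lem-non-est} bound for that piece is $\lesssim \delta^{\alpha\theta\beta_2/p}$, which blows up as $\delta\to\infty$; the superlinearity $\alpha\theta>1$ makes this worse, not better. Nor can this piece be ``absorbed by the local theory interval by interval'': it lives in the \emph{past} of $T_0$, so it enters the driving-term estimate as an inhomogeneous source, whereas the bootstrap can only absorb the self-interaction $\int_{T_0}^{t}$ over the future interval.

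The missing idea is the pigeonhole selection of $T_0$, which is the heart of the paper's proof: $T_0$ cannot be an arbitrary large time, it must be adapted to the solution. Applying \eqref{mora-est-I} on $J=[T/4,T/2]$, whose Morawetz bound is $CT^{\beta_2}$, one covers $J$ by $\sim\vareps^{-1}T^{\beta_2}$ windows of length $\vareps T^{1-\beta_2}$ and finds one window $[t_0,t_1]$, $t_1=t_0+\vareps T^{1-\beta_2}$, on which $\int\|u(s)\|^p_{L^p}\,ds\leq C\vareps$ --- smallness \emph{independent of the window length}. Taking $T_0=t_1$ resolves both halves at once: the near piece $\int_{t_0}^{t_1}$ is $\lesssim\vareps^{\alpha\theta/p}$ by Lemma~\ref{lem-non-est}, while the far piece $\int_0^{t_0}$ sees a gap $|t-s|\geq\vareps T^{1-\beta_2}$ for all $t\geq t_1$, so the dispersive estimate, Cauchy--Schwarz in $s$ and \eqref{mora-est} give an $L^\infty_x$ bound $\lesssim(\vareps T^{1-\beta_1-\beta_2})^{-1/2}$, which after interpolation against the $L^4_{t,x}$ Strichartz bound is small for $T$ large precisely because $\beta_1+\beta_2<1$. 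Note also that your bookkeeping for the far regime is off: $\beta_2$ enters through the length of the pigeonholed gap, not through an application of \eqref{mora-est-I} inside the far integral, so the claimed net power $T^{\beta_1+\beta_2-1}$ does not arise the way you describe. With the pigeonhole inserted, the rest of your argument would go through essentially as in the paper.
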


\begin{proof}
	Let $\vareps>0$ be a small parameter to be chosen later. By the Sobolev embedding and Strichartz estimates,
	\[
	\|e^{it\Delta} u_0\|_{L^{\alpha+2+b}(\R \times \R^2)} \lesssim \||\nabla|^{\gamma_b} e^{it\Delta} u_0\|_{L^{\alpha+2+b}(\R, L^{\frac{2(\alpha+2+b)}{\alpha+b}})} \lesssim \|u_0\|_{H^1},
	\]
	where $\gamma_b:= \frac{\alpha-2+b}{\alpha+2+b} \in (0,1)$ since $\alpha>2-b$. We split $\R$ into $K=K(\vareps, u_0, Q)$ disjoint intervals $I_k$ such that
	\begin{align} \label{Lp-bou-proo-1}
	\|e^{it\Delta} u_0\|_{L^{\alpha+2+b}(I_k \times \R^2)} <\vareps, \quad \forall k=1, \cdots, K.
	\end{align}
	Let $T$ be a large parameter depending on $\vareps, u_0$ and $Q$. We will prove that
	\begin{align} \label{Lp-bou-proo-2}
	\|u\|_{L^{\alpha+2+b}(I_k \times \R^2)} \lesssim T, \quad \forall k=1, \cdots, K.
	\end{align}
	By summing over all intervals $I_k, k=1, \cdots, K$, we obtain \eqref{glo-Lp-bou}. Let us now prove \eqref{Lp-bou-proo-2}. By Sobolev embedding and the fact $\|u(t)\|_{H^1} \leq C(u_0,Q)$,
	\begin{align} \label{sob-app}
	\|u\|_{L^{\alpha+2+b}(I \times \R^2)}^{\alpha+2+b} \leq C(u_0,Q) |I|
	\end{align}
	for any interval $I \subset \R$. It suffices to show \eqref{Lp-bou-proo-2} with $|I_k|>2T$. Let us fix one such interval, say $I=(a,e)$ with $|I|>2T$. We will show that there exists $t_1 \in (a, a+T)$ such that
	\begin{align} \label{Lp-bou-proo-3}
	\left\| \int_0^{t_1} e^{i(t-s)\Delta} |x|^{-b} |u|^\alpha u(s) ds\right\|_{L^{\alpha+2+b}([t_1,+\infty) \times \R^2)} \leq C(u_0,Q) \vareps^{\frac{1}{2}}.
	\end{align}
	Assume \eqref{Lp-bou-proo-3} for the moment, let us prove \eqref{Lp-bou-proo-2}. By the Duhamel formula
	\[
	e^{i(t-t_1)\Delta} u(t_1) = e^{it\Delta} u_0 + i \int_0^{t_1} e^{i(t-s)\Delta} |x|^{-b} |u|^\alpha u(s) ds,
	\]
	\eqref{Lp-bou-proo-1} and \eqref{Lp-bou-proo-3}, we infer that
	\[
	\|e^{i(t-t_1)\Delta} u(t_1)\|_{L^{\alpha+2+b}([t_1,e] \times \R^2)} \leq C(u_0,Q) \vareps^{\frac{1}{2}}.
	\]
	We also have 
	\[
	u(t) = e^{i(t-t_1)\Delta} u(t_1) + i \int_{t_1}^t e^{i(t-s)\Delta} |x|^{-b} |u|^\alpha u(s) ds.
	\]
	By Strichartz estimates and \eqref{non-est-sob}, 
	\begin{align*}
	\Big\| \int_{t_1}^t e^{i(t-s)\Delta} |x|^{-b} |u|^\alpha u(s) ds &\Big\|_{L^{\alpha+2+b}([t_1,e] \times \R^2)} \\
	&\lesssim \Big\| |\nabla|^{\gamma_b} \int_{t_1}^t e^{i(t-s)\Delta} |x|^{-b} |u|^\alpha u(s) ds \Big\|_{L^{\alpha+2+b}([t_1,e], L^{\frac{2(\alpha+2+b)}{\alpha+b}})} \\
	&\lesssim \||\nabla|^{\gamma_b}(|x|^{-b} |u|^\alpha u)\|_{S'(L^2,[t_1,e])} \\
	&\lesssim \|u\|^{\alpha \theta}_{L^{\alpha+2+b}([t_1,e]\times \R^2)} \|\scal{\nabla} u\|^{\alpha(1-\theta)}_{L^\infty([t_1,e], L^2)} \||\nabla|^{\gamma_b} u\|_{L^\infty([t_1,e], L^2)} \\
	&\leq C(u_0,Q) \|u\|^{\alpha\theta}_{L^{\alpha+2+b}([t_1,e]\times \R^2)}
	\end{align*}
	for some $\theta \in (0,1)$ satisfying $\alpha\theta>1$. It follows that
	\begin{align*}
	\|u\|_{L^{\alpha+2+b}([t_1,e]\times \R^2)} &\leq \|e^{i(t-t_1)\Delta} u_0\|_{L^{\alpha+2+b}([t_1,e]\times \R^2)} + C(u_0,Q)\|u\|^{\alpha\theta}_{L^{\alpha+2+b}([t_1,e] \times \R^2)} \\
	&\leq C(u_0,Q) \vareps^{\frac{1}{2}} + C(u_0,Q) \|u\|^{\alpha\theta}_{L^{\alpha+2+b}([t_1,e]\times \R^2)}
	\end{align*}
	which, by the continuity argument, implies that
	\begin{align} \label{est-t1-e}
	\|u\|_{L^{\alpha+2+b}([t_1,e]\times \R^2)} \leq C(\vareps, u_0,Q).
	\end{align}
	On the other hand, by \eqref{sob-app} and the fact $t_1-a<T$, we see that
	\begin{align} \label{est-a-t1}
	\|u\|_{L^{\alpha+2+b}([a,t_1]\times \R^2)} \leq C(u_0,Q) |t_1-a|^{\frac{1}{\alpha+2+b}} \leq C(u_0,Q) T^{\frac{1}{\alpha+2+b}}.
	\end{align}
	Combining \eqref{est-t1-e} and \eqref{est-a-t1}, we prove \eqref{Lp-bou-proo-2}. 
	
	It remains to show \eqref{Lp-bou-proo-3}. By the time translation, we may assume that $a=0$. We first claim that there exists $t_0 \in \left[\frac{T}{4}, \frac{T}{2}\right]$ such that
	\begin{align} \label{Lp-bou-proo-4}
	\int_{t_0}^{t_0+\vareps T^{1-\beta_2}} \|u(s)\|^{\alpha+2+b}_{L^{\alpha+2+b}} ds \leq C(u_0,Q) \vareps,
	\end{align}
	where $\beta_2$ is as in \eqref{mora-est-I}. Indeed, we cover the interval $J=\left[\frac{T}{4}, \frac{T}{2}\right]$ by $L=\vareps^{-1} T^{\beta_2}$ disjoint intervals $J_l$ of length $\vareps T^{1-\beta_2}$ and use \eqref{mora-est-I} to have
	\[
	L \min_{1\leq l \leq L} \int_{J_l} \|u(s)\|^{\alpha+2+b}_{L^{\alpha+2+b}} ds \leq \sum_{l=1}^L \int_{J_l} \|u(s)\|^{\alpha+2+b}_{L^{\alpha+2+b}} ds = \int_J \|u(s)\|^{\alpha+2+b}_{L^{\alpha+2+b}} ds \leq C(u_0,Q) T^{\beta_2}.
	\]
	There thus exists $l_0 \in \{1, \cdots, L\}$ such that
	\[
	\int_{I_{l_0}} \|u(s)\|^{\alpha+2+b}_{L^{\alpha+2+b}} ds \leq C(u_0,Q) \vareps
	\]
	which proves the claim. We now set
	\begin{align} \label{def-t1}
	t_1:= t_0 + \vareps T^{1-\beta_2}.
	\end{align}
	Since $t_0<\frac{T}{2}$, by reducing $\vareps$ if necessary, we may assume that $t_1<T$. We will estimate the time interval in \eqref{Lp-bou-proo-3} by considering separately $[0,t_0]$ and $[t_0,t_1]$. On $[0,t_0]$, we use the dispersive estimate to get
	\[
	\left\| \int_0^{t_0} e^{i(t-s)\Delta} |x|^{-b} |u|^\alpha u(s) ds \right\|_{L^\infty} \lesssim \int_0^{t_0} |t-s|^{-1} \left( \int |x|^{-b} |u(s,x)|^{\alpha+1} dx\right) ds.
	\]
	By H\"older's inequality, we estimate
	\begin{align*}
	\int |x|^{-b} |u(s,x)|^{\alpha+1} dx &\leq \||x|^{-\frac{b}{2}} |u(s)|^{\frac{\alpha+2}{2}} \|_{L^2} \||x|^{-\frac{b}{2}}\|_{L^{\nu_1}(B)} \||u(s)|^{\frac{\alpha}{2}}\|_{L^{\rho_1}}  \\
	&\mathrel{\phantom{\leq}}+ \||x|^{-\frac{b}{2}} |u(s)|^{\frac{\alpha+2}{2}} \|_{L^2} \||x|^{-\frac{b}{2}}\|_{L^{\nu_2}(B)} \||u(s)|^{\frac{\alpha}{2}}\|_{L^{\rho_2}} \\
	&\leq \||x|^{-\frac{b}{2}} |u(s)|^{\frac{\alpha+2}{2}} \|_{L^2} \||x|^{-\frac{b}{2}}\|_{L^{\nu_1}(B)} \|u(s)\|^{\frac{\alpha}{2}}_{L^{^{\frac{\alpha \rho_1}{2}}}} \\
	&\mathrel{\phantom{\leq}}+ \||x|^{-\frac{b}{2}} |u(s)|^{\frac{\alpha+2}{2}} \|_{L^2} \||x|^{-\frac{b}{2}}\|_{L^{\nu_2}(B)} \|u(s)\|^{\frac{\alpha}{2}}_{L^{^{\frac{\alpha \rho_2}{2}}}} \\
	&\lesssim \left(\int |x|^{-b} |u(s,x)|^{\alpha+2} dx\right)^{\frac{1}{2}}
	\end{align*}
	provided that $\nu_1, \nu_2, \rho_1, \rho_2 \geq 1$ satisfy
	\begin{align*}
	\frac{1}{2}=\frac{1}{\nu_1} + \frac{1}{\rho_1} =\frac{1}{\nu_2} +\frac{1}{\rho_2}, \quad \frac{2}{\nu_1}>\frac{b}{2}, \quad \frac{2}{\nu_2}<\frac{b}{2}, \quad \frac{\alpha \rho_1}{2} >2, \quad \frac{\alpha \rho_2}{2}>2.
	\end{align*}
	Since $0<b<1$ and $\alpha>2-b$, it is easy to check that the above conditions hold for a suitable choice of $\nu_1,\nu_2, \rho_1$ and $\rho_2$. 
	Thanks to \eqref{mora-est}, we have for $t>t_1$ 
	\begin{align*}
	\left\| \int_0^{t_0} e^{i(t-s)\Delta} |x|^{-b} |u|^\alpha u(s) ds \right\|_{L^\infty} &\lesssim \int_0^{t_0} |t-s|^{-1} \left(\int |x|^{-b} |u(s,x)|^{\alpha+2} dx\right)^{\frac{1}{2}} ds \\
	&\lesssim \left( \int_0^{t_0} |t-s|^{-2} ds\right)^{\frac{1}{2}} \left( \int_0^{t_0} \int |x|^{-b} |u(s,x)|^{\alpha+2} dx ds \right)^{\frac{1}{2}} \\
	&\lesssim |t-t_0|^{-\frac{1}{2}} |t_0|^{\frac{\beta_1}{2}} \\
	&\lesssim |t_1-t_0|^{-\frac{1}{2}} |t_0|^{\frac{\beta_1}{2}} \\
	&\lesssim (\vareps T^{1-\beta_2})^{-\frac{1}{2}} T^{\frac{\beta_1}{2}} \\
	&\lesssim (\vareps T^{1-\beta_1-\beta_2})^{-\frac{1}{2}}.
	\end{align*}
	Note that since $0<b<1$ and $\alpha>2-b$, it is easy to see that $\beta_1+\beta_2 <1$. 
	We thus obtain
	\[
	\left\| \int_0^{t_0} e^{i(t-s)\Delta} |x|^{-b} |u|^\alpha u(s) ds \right\|_{L^\infty([t_1,+\infty) \times \R^2)} \leq C(u_0,Q) (\vareps T^{1-\beta_1-\beta_2})^{-\frac{1}{2}}.
	\]
	On the other hand, we use the fact
	\[
	i \int_0^{t_0} e^{i(t-s)\Delta} |x|^{-b} |u|^\alpha u(s) ds = e^{i(t-t_0)\Delta} u_0 - e^{it\Delta} u_0
	\]
	and Strichartz estimates to have
	\[
	\left\| \int_0^{t_0} e^{i(t-s)\Delta} |x|^{-b} |u|^\alpha u(s) ds \right\|_{L^4([t_1,+\infty) \times \R^2)} \leq C(u_0,Q).
	\]
	Interpolating between $L^4$ and $L^\infty$, it yields
	\begin{align*}
	\left\| \int_0^{t_0} e^{i(t-s)\Delta} |x|^{-b} |u|^\alpha u(s) ds\right\|_{L^{\alpha+2+b}([t_1,+\infty) \times \R^2)} \leq C(u_0,Q) \left(\vareps T^{1-\beta_1-\beta_2} \right)^{-\frac{\alpha-2+b}{2(\alpha+2+b)}}.
	\end{align*}
	On $[t_0,t_1]$, we use \eqref{Lp-bou-proo-4} and \eqref{non-est-sob} to have
	\begin{align*}
	\Big\|\int_{t_0}^{t_1} e^{i(t-s)\Delta} |x|^{-b} |u|^\alpha u(s) ds &\Big\|_{L^{\alpha+2+b}([t_1,+\infty)\times \R^2)} \\
	&\lesssim \||\nabla|^{\gamma_b}(|x|^{-b} |u|^\alpha u)\|_{S'(L^2, [t_0,t_1])} \\
	&\lesssim \|u\|^{\alpha \theta}_{L^{\alpha+2+b}([t_0,t_1]\times \R^2)} \|\scal{\nabla} u\|^{\alpha(1-\theta)}_{L^\infty([t_0,t_1],L^2)} \||\nabla|^{\gamma_b} u\|_{L^\infty([t_0,t_1], L^2)} \\
	&\leq C(u_0,Q) \vareps^{\frac{\alpha\theta}{\alpha+2+b}}.
	\end{align*}
	We thus obtain
	\[
	\left\|\int_0^{t_1} e^{i(t-s)\Delta} |x|^{-b} |u|^\alpha u(s) ds \right\|_{L^{\alpha+2+b}([t_1,+\infty)\times \R^2)} \leq C(u_0,Q) \left[\left(\vareps T^{1-\beta_1-\beta_2} \right)^{-\frac{\alpha-2+b}{2(\alpha+2+b)}} + \vareps^{\frac{\alpha\theta}{\alpha+2+b}} \right].
	\]
	Note that by taking $\infty- = \frac{1}{\epsilon}$ in Lemma $\ref{lem-non-est}$ with $\epsilon>0$ sufficiently small, we see that $\frac{\alpha \theta}{\alpha+2+b} >\frac{1}{2}$. By taking $T$ large enough depending on $\vareps$, we prove \eqref{Lp-bou-proo-3}. The proof is complete.
	\end{proof}

\begin{corollary} \label{coro-glo-Lp-bou}
	Let $N=2$, $0<b<1$ and $\alpha>2-b$. Let $u_0 \in H^1$ be radially symmetric. Then the corresponding global solution to the defocusing problem \eqref{INLS} satisfies
	\[
	\|u\|_{L^{\alpha+2+b}(\R \times \R^2)} \leq C(u_0)<\infty.
	\]
\end{corollary}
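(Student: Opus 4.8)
The plan is to follow the proof of Proposition \ref{prop-glo-Lp-bou} essentially verbatim, replacing each focusing-specific input by its defocusing counterpart. That argument rested on three ingredients: (i) the global-in-time uniform bound $\|u(t)\|_{H^1}\leq C(u_0,Q)$; (ii) the Morawetz space-time estimates \eqref{mora-est} and \eqref{mora-est-I}; and (iii) the nonlinear estimate \eqref{non-est-sob} of Lemma \ref{lem-non-est}. Of these, (iii) is sign-independent and applies without change. For (ii), I would invoke Corollary \ref{coro-mora-est} in place of Proposition \ref{prop-mora-est}: it furnishes exactly the same bounds with the same exponents $\beta_1,\beta_2$ (so that $0<\beta_1+\beta_2<1$ still holds), the only difference being that the implicit constant now depends on the mass and energy of $u_0$ rather than on $u_0$ and $Q$.

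The crucial simplification concerns (i). In the defocusing case the energy
\[
E(u(t))=\tfrac12\|\nabla u(t)\|_{L^2}^2+\tfrac{1}{\alpha+2}\int |x|^{-b}|u(t,x)|^{\alpha+2}\,dx
\]
is a sum of nonnegative terms, so conservation of energy gives $\|\nabla u(t)\|_{L^2}^2\leq 2E(u_0)$ directly; together with conservation of mass this yields $\|u(t)\|_{H^1}\leq C(u_0)$ for all $t$ in the maximal existence interval, whence the solution is global. No variational analysis (Lemmas \ref{lem-coer-1}--\ref{lem-coer-2}) and no hypotheses \eqref{con-energy}, \eqref{con-grad} are required here, which is precisely why the statement holds for an arbitrary radial $u_0\in H^1$.

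With these substitutions the remainder transfers word for word. First I would bound the free evolution by $\|e^{it\Delta}u_0\|_{L^{\alpha+2+b}(\R\times\R^2)}\lesssim\|u_0\|_{H^1}$ via the Sobolev embedding and Strichartz, and split $\R$ into finitely many intervals $I_k$ on which this norm is $<\vareps$. On an interval $I=(a,e)$ with $|I|>2T$ I would use \eqref{mora-est-I-defocus} to locate a time $t_0\in[T/4,T/2]$ with $\int_{t_0}^{t_0+\vareps T^{1-\beta_2}}\|u(s)\|_{L^{\alpha+2+b}}^{\alpha+2+b}\,ds\leq C(u_0)\vareps$, set $t_1:=t_0+\vareps T^{1-\beta_2}$, and estimate the Duhamel term over $[0,t_1]$ by splitting into $[0,t_0]$ and $[t_0,t_1]$. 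On $[0,t_0]$ I would combine the dispersive estimate with the Morawetz bound to obtain decay of order $(\vareps T^{1-\beta_1-\beta_2})^{-1/2}$, then interpolate against an $L^4$ Strichartz bound; on $[t_0,t_1]$ I would apply \eqref{non-est-sob} with $\alpha\theta>1$.

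A continuity/bootstrap argument then controls $\|u\|_{L^{\alpha+2+b}([t_1,e]\times\R^2)}$, while $\|u\|_{L^{\alpha+2+b}([a,t_1]\times\R^2)}\lesssim T^{1/(\alpha+2+b)}$ follows from the Sobolev bound \eqref{sob-app} and $t_1-a<T$; summing over the $I_k$ yields the claimed global bound. The only point requiring genuine verification is the uniform $H^1$ bound discussed above. Once that is in hand there is no real obstacle, since every subsequent estimate is literally the one in the proof of Proposition \ref{prop-glo-Lp-bou} with $C(u_0,Q)$ replaced throughout by $C(u_0)$.
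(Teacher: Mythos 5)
Your proposal is correct and follows exactly the same route as the paper, which simply runs the proof of Proposition \ref{prop-glo-Lp-bou} again with the defocusing Morawetz bound \eqref{mora-est-I-defocus} in place of \eqref{mora-est-I}. Your additional observation that the uniform $H^1$ bound comes for free from conservation of the (sign-definite) defocusing energy, so that no variational hypotheses are needed, is precisely the implicit reason the corollary holds for arbitrary radial data.
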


\begin{proof}
	The proof is similar to the one of Proposition $\ref{prop-glo-Lp-bou}$ by using \eqref{mora-est-I-defocus} instead of \eqref{mora-est-I}.
\end{proof}

We are now able to show the energy scattering given in Theorem $\ref{theo-scat-2D-focus}$.

\noindent {\bf Proof of Theorem $\ref{theo-scat-2D-focus}$.}
We first show that the global bound \eqref{glo-Lp-bou} implies the global Strichartz bound
\[
\|\scal{\nabla} u\|_{S(L^2,\R)} \leq C(u_0,Q)<\infty.
\]
To see this, we use Strichartz estimates, \eqref{non-est-sob} and \eqref{glo-Lp-bou} to have
\begin{align*}
\|\scal{\nabla} u\|_{S(L^2,\R)} &\leq \|\scal{\nabla}u_0\|_{L^2} + \|\scal{\nabla}(|x|^{-b} |u|^\alpha u)\|_{S'(L^2, \R)} \\
&\leq C(u_0,Q) + \|u\|^{\alpha \theta}_{L^{\alpha+2+b}(\R\times \R^2)} \|\scal{\nabla} u\|^{1+\alpha(1-\theta)}_{L^\infty(\R,L^2)} \\
&\leq C(u_0,Q).
\end{align*}
We now show the energy scattering of the global solution. By the time reversal symmetry, it suffices to consider positive times. By Duhamel formula, Strichartz estimates and \eqref{non-est-sob}, we have for $0<t_1<t_2$,
\begin{align*}
\|e^{-it_2\Delta} u(t_2) - e^{-it_1\Delta} u(t_1)\|_{H^1} & = \left\| i \int_{t_1}^{t_2} e^{-is\Delta} |x|^{-b} |u|^\alpha u(s) ds \right\|_{H^1} \\
&\lesssim \|\scal{\nabla} (|x|^{-b} |u|^\alpha u)\|_{S'(L^2,[t_1,t_2])} \\
&\lesssim \|u\|^{\alpha\theta}_{L^{\alpha+2+b}([t_1,t_2]\times \R^2)} \|\scal{\nabla} u\|^{1+\alpha(1-\theta)}_{L^\infty([t_1,t_2], L^2)}.
\end{align*}
Thanks to \eqref{glo-Lp-bou}, we see that
\[
\|e^{-it_2\Delta} u(t_2) - e^{-it_1\Delta} u(t_1)\|_{H^1} \rightarrow 0 \text{ as } t_1, t_2 \rightarrow +\infty.
\]
Thus the limit
\[
u^+_0:= \lim_{t\rightarrow +\infty} e^{-it\Delta} u(t) = u_0 + i\int_0^{+\infty} e^{-is\Delta} |x|^{-b} |u|^\alpha u(s) ds
\]
exists in $H^1$. Arguing as above, we prove as well that
\[
\|u(t)-e^{it\Delta} u_0^+\|_{H^1} \rightarrow 0 \text{ as } t\rightarrow +\infty.
\]
The proof is complete.
\hfill $\Box$

\noindent {\bf Proof of Theorem $\ref{theo-scat-2D-defocus}$.}
The proof is completely similar to the one of Theorem $\ref{theo-scat-2D-focus}$ using Corollary $\ref{coro-glo-Lp-bou}$.
\hfill $\Box$

	\section*{Acknowledgement}
	This work was supported in part by the Labex CEMPI (ANR-11-LABX-0007-01). The author would like to express his deep gratitude to his wife - Uyen Cong for her encouragement and support. He also would like to thank the reviewer for his/her helpful comments and suggestions. 
	
	\appendix
	
	\section{Alternative proof for the energy scattering in dimensions $N\geq 3$}
	\label{S4}
	\setcounter{equation}{0}
	In this section, we give an alternative proof for the energy scattering of non-radial solutions to the defocusing problem \eqref{INLS} in dimensions $N\geq 3$.
	
	\begin{theorem} \label{theo-ene-sca-INLS}
		Let 
		\[
		N\geq 4, \quad 0<b<2, \quad \frac{4-2b}{N} < \alpha <\frac{4-2b}{N-2},
		\]
		and
		\[
		N=3, \quad 0<b<\frac{5}{4}, \quad \frac{4-2b}{3} <\alpha <3-2b.
		\]
		Let $u_0 \in H^1$ and $u$ be the corresponding global solution to the defocusing problem \eqref{INLS}. Then there exist $u_0^\pm \in H^1$ such that 
		\[
		\lim_{t\rightarrow \pm \infty} \|u(t) -e^{it\Delta} u_0^\pm\|_{H^1} =0.
		\]
	\end{theorem}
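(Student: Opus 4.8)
The plan is to run the same three-step scheme as in Section~\ref{S3}---global a priori bounds, a global spacetime Morawetz bound, and a Strichartz bootstrap that yields scattering---but to replace the radial Morawetz input of Proposition~\ref{prop-mora-est} by an \emph{interaction} Morawetz estimate, which requires no symmetry of the data. First I would record the a priori bounds. In the defocusing case the energy
\[
E(u(t)) = \tfrac{1}{2}\|\nabla u(t)\|_{L^2}^2 + \tfrac{1}{\alpha+2}\int |x|^{-b}|u(t,x)|^{\alpha+2}\,dx
\]
is a sum of two non-negative terms, so conservation of mass and energy gives $\sup_{t\in\R}\|u(t)\|_{H^1}\le C(u_0)$; together with the Strichartz-based local theory of Guzm\'an and Dinh this produces a global solution lying in $L^q_{\loc}(\R,W^{1,r})$ for every admissible $(q,r)$.

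The heart of the argument is the second step: a global interaction Morawetz estimate. I would apply the virial computation of Lemma~\ref{lem-virial-iden} to the two-particle weight $|x-y|$, i.e. to the functional
\[
M^{\otimes}(t) := \int\int |u(t,y)|^2\, \nabla_x |x-y|\cdot \ima\big(\overline{u}(t,x)\nabla u(t,x)\big)\,dx\,dy,
\]
symmetrized in $x\leftrightarrow y$. The purely kinetic (mass--mass) contribution, coming from $-\Delta_x^2|x-y|$ and independent of the nonlinearity, is coercive and controls a global spacetime norm; concretely, for $N=3$ one expects $\|u\|_{L^4_{t,x}(\R\times\R^3)}\le C(u_0)<\infty$, and for general $N\ge3$ an estimate of the form $\|u\|_{L^4_t(\R,L^{r}_x)}\le C(u_0)$ with $r=r(N)$. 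Combined with $\sup_t\|u(t)\|_{H^1}\le C(u_0)$ and the induced $\dot H^{1/2}$ bound, this finishes the step.

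I would then upgrade the interaction Morawetz bound to a genuine scattering norm: interpolating the $L^4_t L^{r}_x$ bound against the uniform $H^1$ bound and using Sobolev embedding yields a finite global bound $\|u\|_{L^{q_0}_t L^{r_0}_x(\R\times\R^N)}\le C(u_0)$ for a pair $(q_0,r_0)$ adapted to the nonlinearity. It is precisely the exponent count here that selects the admissible range of $(N,b,\alpha)$: for $N\ge4$ the window reaches the energy-critical exponent $2^*=\frac{4-2b}{N-2}$, whereas for $N=3$ the weaker embedding forces the stronger restriction $\alpha<3-2b$, with $b<\tfrac{5}{4}$ being exactly the condition that makes the interval $(\frac{4-2b}{3},\,3-2b)$ nonempty. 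With this global bound in hand I would close the $N\ge3$ analogue of the nonlinear estimate of Lemma~\ref{lem-non-est} and run the partition-plus-Strichartz bootstrap of Proposition~\ref{prop-glo-Lp-bou}: split $\R$ into finitely many intervals on which the scattering norm is small, close a contraction in $S(L^2,\cdot)$ on each, and thereby obtain the global Strichartz bound $\|\scal{\nabla}u\|_{S(L^2,\R)}\le C(u_0)<\infty$. Scattering then follows verbatim as in the proof of Theorem~\ref{theo-scat-2D-focus}: the global Strichartz bound makes $e^{-it\Delta}u(t)$ Cauchy in $H^1$ as $t\to\pm\infty$, producing the asymptotic states $u_0^\pm$.

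The main obstacle I anticipate is the sign analysis of the \emph{nonlinear} contribution to $\frac{d}{dt}M^{\otimes}(t)$. For the homogeneous defocusing equation this contribution is manifestly non-negative and may simply be discarded, but the weight $|x|^{-b}$ breaks translation invariance and enters through $\nabla(|x|^{-b})=-b\,|x|^{-b-2}x$, producing a term proportional to $|x|^{-b-2}\,(x\cdot\nabla_x|x-y|)=|x|^{-b-2}\,\frac{x\cdot(x-y)}{|x-y|}$ that is not pointwise sign-definite and is not repaired by the $x\leftrightarrow y$ symmetrization. The crux is to show that this inhomogeneous term either retains the favorable defocusing sign after integration or is dominated by the coercive kinetic term; this is where the upper bound on $b$ is genuinely used. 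The lower bound $\alpha>\frac{4-2b}{N}$, by contrast, plays the same role as $\alpha>2-b$ in Lemma~\ref{lem-non-est}, ensuring the gain $\alpha\theta>1$ that drives the bootstrap. The only remaining work is the exponent bookkeeping in the interpolation step, which is what ultimately pins down the $N=3$ restriction.
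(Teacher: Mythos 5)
Your overall architecture is exactly that of the paper's appendix: an interaction Morawetz estimate (valid without radial symmetry), interpolation against the uniform $H^1$ bound to get a global spacetime norm, nonlinear estimates in the spirit of Lemma~\ref{lem-non-est}, a partition-plus-Strichartz bootstrap giving $\|\scal{\nabla}u\|_{S(L^2,\R)}<\infty$, and the Cauchy criterion for $e^{-it\Delta}u(t)$. However, there is a genuine gap at the single most important step: you never actually establish the interaction Morawetz inequality for \eqref{INLS}. You correctly identify the obstruction --- after the virial computation with weight $|x-y|$, the inhomogeneity contributes a term proportional to
\[
\iint |u(t,y)|^2\, |x|^{-b-2}\,\frac{x\cdot(x-y)}{|x-y|}\,|u(t,x)|^{\alpha+2}\,dx\,dy \quad (+ \ x\leftrightarrow y),
\]
which is not pointwise signed, and the $x\leftrightarrow y$ symmetrization does not help because the two densities $|u(y)|^2|u(x)|^{\alpha+2}$ and $|u(x)|^2|u(y)|^{\alpha+2}$ do not match (nor does a monotonicity argument apply: the Hessian of $|x|^{-b}$ has negative tangential eigenvalues, so the vector field $|x|^{-b-2}x$ is not monotone). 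You then declare resolving this to be ``the crux'' and leave both suggested resolutions unexecuted. The paper does not re-derive this estimate either; it invokes \cite[Proposition 4.7]{Dinh-repul} (with $V=0$, $W=|x|^{-b}$) to obtain \eqref{inter-morawetz}, so in a self-contained blind proof this step is precisely what must be supplied, and your proposal does not supply it.

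A secondary, but substantive, misdiagnosis: you claim the upper bound on $b$ ``is genuinely used'' in the sign analysis of the Morawetz term. In fact the repulsivity condition $x\cdot\nabla(|x|^{-b})=-b|x|^{-b}\le 0$ holds for \emph{every} $b>0$, and the bound \eqref{inter-morawetz} is stated for all admissible parameters in $N\ge3$; none of the hypotheses of Theorem~\ref{theo-ene-sca-INLS} originate there. In the paper's proof, all parameter restrictions --- including $0<b<\frac{5}{4}$ and $\alpha<3-2b$ when $N=3$ --- enter through the nonlinear estimates (Lemma~\ref{lem-non-est-defo}), and specifically through the term $|x|^{-b-1}|u|^\alpha u$ produced when the derivative falls on the weight: for $N\ge4$ one can use the homogeneous Sobolev embedding, while for $N=3$ the relevant exponent $\frac{2N(2+\eps)}{(N-2)(2+\eps)-4}$ is negative, forcing the inhomogeneous embedding and hence the stronger constraints. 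Your sketch of the bootstrap would also need these estimates written out (they are where the exponent bookkeeping actually lives), but the unproven interaction Morawetz inequality is the decisive missing piece.
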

	This result has been obtained in \cite{Dinh-scat} by using the decaying property of global solutions. Here we present a shorter proof via the interaction Morawetz inequality.
	
	We have from \cite[Proposition 4.7]{Dinh-repul} (by taking $V=0$ and $W=|x|^{-b}$) that the following interaction Morawetz inequality holds true for the defocusing problem \eqref{INLS} in dimensions $N\geq 3$
	\begin{align} \label{inter-morawetz}
	\||\nabla|^{-\frac{N-3}{4}} u\|_{L^4(\R,L^4)} \leq \|u\|_{L^\infty(\R,L^2)}^{\frac{3}{4}} \|\nabla u\|_{L^\infty(\R,L^2)}^{\frac{1}{4}}.
	\end{align}
	
	Using \eqref{inter-morawetz}, the interpolation inequality yields
	\[
	\|u\|_{L^{\frac{N-3+4\sigma}{\sigma}}(\R, L^{\frac{2(N-3+4\sigma)}{N-3+2\sigma}})} \lesssim \||\nabla|^{-\frac{N-3}{4}} u\|_{L^4(\R,L^4)}^{\frac{4\sigma}{N-3+4\sigma}} \||\nabla|^\sigma u\|_{L^\infty(\R,L^2)}^{\frac{N-3}{N-3+4\sigma}},
	\]
	for $0 \leq \sigma \leq 1$. Taking $\sigma=1$, we get
	\[
	\|u\|_{L^{N+1}(\R, L^{\frac{2(N+1)}{N-1}})} \lesssim \left(\|u\|_{L^\infty(J,L^2)}^{\frac{3}{4}} \|\nabla u\|_{L^\infty(\R,L^2)}^{\frac{1}{4}} \right)^{\frac{4}{N+1}} \|\nabla u\|_{L^\infty(\R,L^2)}^{\frac{N-3}{N+1}} = \|u\|_{L^\infty(\R,L^2)}^{\frac{3}{N+1}} \|\nabla u\|_{L^\infty(\R,L^2)}^{\frac{N-2}{N+1}}.
	\]
	By the conservation of mass and energy, we obtain the global bound for global solutions to defocusing problem \eqref{INLS} in dimensions $N\geq 3$,
	\begin{align} \label{glo-bou}
	\|u\|_{L^{N+1}(\R, L^{\frac{2(N+1)}{N-1}})} \leq C(E,M)<\infty. 
	\end{align}
	
	To show the energy scattering, we need the following nonlinear estimates. 
	\begin{lemma} \label{lem-non-est-defo}
		Let $N, b$ and $\alpha$ be as in Theorem $\ref{theo-ene-sca-INLS}$. Let $u$ be the global solution to the defocusing problem \eqref{INLS}. Then there exists $\eps>0$ small enough such that for any time interval $J$ and $k=0, 1$,
		\begin{align*}
		\||x|^{-b} |\nabla|^k(|u|^\alpha u)\|_{L^2(J, L^{\frac{2N}{N+2}})} &\lesssim \||\nabla|^k u\|_{L^{2+\eps}(J, L^{\frac{2N(2+\eps)}{N(2+\eps)-4}})} \|u\|^{\frac{\eps(N+1)}{2(2+\eps)}}_{L^{N+1}(J, L^{\frac{2(N+1)}{N-1}})} \|u\|_{L^\infty(J, L^2)}^{a_1(\eps)} \|\nabla u\|_{L^\infty(J,L^2)}^{b_1(\epsilon)}, 
		\end{align*}
		and
		\begin{align*}
		\||x|^{-b-1} |u|^\alpha u\|_{L^2(J, L^{\frac{2N}{N+2}})} &\lesssim \|\scal{\nabla} u\|_{L^{2+\eps}(J, L^{\frac{2N(2+\eps)}{N(2+\eps)-4}})} \|u\|^{\frac{\eps(N+1)}{2(2+\eps)}}_{L^{N+1}(J, L^{\frac{2(N+1)}{N-1}})} \|u\|_{L^\infty(J, L^2)}^{a_2(\eps)} \|\nabla u\|_{L^\infty(J,L^2)}^{b_2(\epsilon)},
		\end{align*}
		for some positive numbers $a_1(\eps), b_1(\eps), a_2(\eps)$ and $b_2(\eps)$.
	\end{lemma}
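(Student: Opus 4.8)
The plan is to prove both estimates by the scheme already used in Lemma~\ref{lem-non-est}, now adapted to dimension $N\geq 3$ with the interaction Morawetz norm $\|u\|_{L^{N+1}(J,L^{2(N+1)/(N-1)})}$ (controlled by \eqref{glo-bou}) playing the role of the global spacetime norm. Note first that the target space $L^2(J,L^{2N/(N+2)})$ is the dual of the admissible pair $(2,2N/(N-2))$, so once the nonlinearity is placed there the Strichartz/scattering argument closes as usual. I would begin by reducing both terms to a single ``derivative copy'' times $\alpha$ plain copies of $u$: writing $|\nabla|^k(|u|^\alpha u)\approx |u|^\alpha|\nabla|^k u$ by the fractional Leibniz/chain rule and $|\nabla|^k(|x|^{-b})=C|x|^{-b-k}$, the first estimate concerns $|x|^{-b}|u|^\alpha|\nabla|^k u$, while the second concerns the more singular $|x|^{-b-1}|u|^\alpha u$.

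For the spatial H\"older step I would split $\R^N=B\cup B^c$ into the unit ball and its complement in order to control the weight, placing $|x|^{-b}\in L^\nu$ with $\tfrac{N}{\nu}>b$ on $B$ and $\tfrac{N}{\nu}<b$ on $B^c$ (the local, resp.\ nonlocal, integrability thresholds). The derivative copy goes into the Strichartz exponent $r_\eps:=\tfrac{2N(2+\eps)}{N(2+\eps)-4}$, for which $(2+\eps,r_\eps)$ is admissible, and the $\alpha$ remaining copies are split, $\alpha=s+m_M+m_E$, among the Morawetz exponent $\tfrac{2(N+1)}{N-1}$ (power $s$), the mass space $L^2$ (power $m_M$), and the Sobolev--energy space $L^{2N/(N-2)}$ via $\|u\|_{L^{2N/(N-2)}}\lesssim\|\nabla u\|_{L^2}$ (power $m_E$). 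In time I would use the balance $\tfrac12=\tfrac{1}{2+\eps}+\tfrac{\eps}{2(2+\eps)}$, which closes using \emph{only} the Strichartz factor and the Morawetz factor, forcing $s=\tfrac{\eps(N+1)}{2(2+\eps)}$ and leaving the mass and energy factors in $L^\infty_t$. This pins down the power of $\|u\|_{L^{N+1}}$ to the stated value and yields the desired positive exponents $a_i=m_M$, $b_i=m_E$.

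The extra ingredient, and the reason the second estimate carries $\scal{\nabla}u$ rather than $u$, is that the additional $|x|^{-1}$ is traded for a derivative through a Hardy-type inequality: assigning $|x|^{-1}$ to the $\scal{\nabla}u$ slot and using $\||x|^{-1}u\|_{\cdot}\lesssim\|\nabla u\|_{\cdot}\leq\|\scal{\nabla}u\|_{\cdot}$ reduces the second estimate back to the weight $|x|^{-b}$ of the first. This is exactly the step that is delicate in low dimension: the more singular weight $|x|^{-b-1}$ forces the stronger restriction on $b$ (namely $b<\tfrac54$ when $N=3$, against $b<2$ for $N\geq4$), because the Hardy--Sobolev trade and the weight integrability must be compatible with $m_M,m_E>0$.

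The main obstacle is therefore purely the exponent bookkeeping, i.e.\ checking that the single spatial H\"older identity admits a solution with $m_M,m_E>0$ and $\nu$ in its admissible window. A short computation (letting $\eps\to0$) reduces the spatial balance to $\tfrac{1}{N}+\tfrac{2}{N(2+\eps)}-s\,\tfrac{N-1}{2(N+1)}-\tfrac1\nu=\tfrac{m_M}{2}+m_E\,\tfrac{N-2}{2N}$; feasibility over $B$ (where $\tfrac1\nu\downarrow\tfrac bN$) then forces $\alpha<\tfrac{4-2b}{N-2}$, while feasibility over $B^c$ (where the weight is integrable only for $\tfrac1\nu<\tfrac bN$, keeping the energy slot active) forces $\alpha>\tfrac{4-2b}{N}$. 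For $N\geq4$ a sufficiently small $\eps>0$ leaves room throughout the stated range. The genuinely delicate case is $N=3$: since $\eps>0$ is indispensable (otherwise $s=0$ and no spacetime decay is injected into the estimate), tracking the $\eps$-dependent loss sharpens the admissible upper bound precisely to $\alpha<3-2b$, matching the hypotheses of Theorem~\ref{theo-ene-sca-INLS}.
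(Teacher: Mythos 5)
Your treatment of the first estimate is essentially the paper's proof: the same ball/complement splitting with $\||x|^{-b}\|_{L^\nu}$ (requiring $\tfrac{N}{\nu}>b$ on $B$, $\tfrac{N}{\nu}<b$ on $B^c$), the fractional chain rule placing the derivative copy in the admissible slot $L^{2+\eps}_t L^{r_\eps}_x$, and the interpolation of the remaining $\alpha$ copies between the Morawetz norm and $L^\infty_t(L^2_x\cap L^{2N/(N-2)}_x)$, with the same bookkeeping forcing the Morawetz power $\tfrac{\eps(N+1)}{2(2+\eps)}$ and the same endpoint analysis ($\eps,\eta\to 0$) producing $\tfrac{4-2b}{N}<\alpha<\tfrac{4-2b}{N-2}$.

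However, your reduction of the second estimate has a genuine gap. You propose to absorb the extra singularity via the same-exponent Hardy inequality $\||x|^{-1}u\|_{L^r}\lesssim\|\nabla u\|_{L^r}$ applied in the slot $L^{2+\eps}_tL^{r_\eps}_x$, but this Hardy inequality on $\R^N$ requires $r<N$, while $r_\eps=\tfrac{2N(2+\eps)}{N(2+\eps)-4}\to\tfrac{2N}{N-2}$ as $\eps\to0$, which is $\geq N$ precisely when $N\leq 4$. In particular for $N=3$ one has $r_\eps\approx 6>3$ and the inequality is simply false, so your scheme collapses exactly in the delicate case it was supposed to handle (for $N=4$ it survives only marginally, with an $\eps^{-1}$ constant, since $r_\eps<4$ for $\eps>0$; reassigning $|x|^{-1}$ to the mass or Morawetz slots does not help either, since Hardy at those exponents fails for $N=3$ or forces a much smaller range of $\alpha$). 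Moreover, your claim that this reduction would "sharpen" the upper bound to $\alpha<3-2b$ is internally inconsistent: if the Hardy step were valid, the second estimate would reduce to the first, whose constraints for $N=3$ read $\tfrac{4-2b}{3}<\alpha<4-2b$, and the stronger restrictions $\alpha<3-2b$, $b<\tfrac54$ would never appear. The paper avoids Hardy altogether: it keeps the full weight $|x|^{-b-1}$ in the H\"older factor (with $\tfrac{N}{\gamma}\gtrless b+1$ on $B$, $B^c$) and compensates by the homogeneous Sobolev embedding $\|u\|_{L^{\frac{2N(2+\eps)}{(N-2)(2+\eps)-4}}}\lesssim\|\nabla u\|_{L^{\frac{2N(2+\eps)}{N(2+\eps)-4}}}$ on one plain copy of $u$ when $N\geq4$; for $N=3$ that Sobolev exponent is negative, so it switches to the inhomogeneous embedding $\|u\|_{L^r}\lesssim\|\scal{\nabla}u\|_{L^{\frac{6(2+\eps)}{3(2+\eps)-4}}}$ for finite $r$, parametrized by $\tau\in(0,1)$, and it is the requirement $0<\tau<3-2b-\alpha$ (together with $\alpha>\tfrac{4-2b}{3}$) that generates $\alpha<3-2b$ and $b<\tfrac54$. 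This is the missing mechanism in your argument, and it is also why the lemma is stated with $\scal{\nabla}u$ rather than $\nabla u$ in the second bound.
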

	
	\begin{proof}
		We write
		\[
		\||x|^{-b} |\nabla|^k(|u|^\alpha u)\|_{L^2(J,L^{\frac{2N}{N+2}})} \leq \||x|^{-b} |\nabla|^k(|u|^\alpha u)\|_{L^2(J,L^{\frac{2N}{N+2}}(B))} + \||x|^{-b} |\nabla|^k(|u|^\alpha u)\|_{L^2(J,L^{\frac{2N}{N+2}}(B^c))}.
		\]
		By H\"older's inequality and the fractional chain rule,
		\begin{align*}
		\||x|^{-b} |\nabla|^k(|u|^\alpha u)\|_{L^2(J,L^{\frac{2N}{N+2}}(B))} &\leq \||x|^{-b}\|_{L^\gamma(B)} \||\nabla|^k(|u|^\alpha u)\|_{L^2(J, L^m)} \\
		&\lesssim \||\nabla|^k u\|_{L^{2+\eps}(J, L^{\frac{2N(2+\eps)}{N(2+\eps)-4}})} \|u\|^\alpha_{L^{\frac{2\alpha(2+\eps)}{\eps}}(J, L^n)},
		\end{align*}
		provided that $\gamma, m, n\geq 1$ satisfy
		\[
		\frac{N}{\gamma}>b, \quad \frac{N+2}{2N} = \frac{1}{\gamma}+\frac{1}{m}, \quad \frac{1}{m} = \frac{N(2+\eps)-4}{2N(2+\eps)} + \frac{\alpha}{n}.
		\]
		Similar estimates hold on $B^c$ provided that the above conditions are satisfied with $\frac{N}{\gamma}<b$ instead of $\frac{N}{\gamma}>b$. We next bound
		\[
		\|u\|_{L^{\frac{2\alpha(2+\eps)}{\eps}}(J, L^n)} \leq \|u\|^{\theta_1}_{L^{N+1}(J,L^{\frac{2(N+1)}{N-1}})} \|u\|^{1-\theta_1}_{L^\infty(J, L^q)},
		\]
		provided that $\theta_1= \frac{\eps(N+1)}{2\alpha(2+\eps)}$ and $q\geq 1$ satisfies
		\[
		\frac{1}{n}=\frac{(N-1)\theta_1}{2(N+1)} + \frac{1-\theta_1}{q}.
		\]
		We continue to bound
		\[
		\|u\|_{L^\infty(J, L^q)} \leq \|u\|^{\theta_2}_{L^\infty(J,L^2)} \|u\|^{1-\theta_2}_{L^\infty(J,L^{\frac{2N}{N-2}})} \lesssim \|u\|^{\theta_2}_{L^\infty(J,L^2)} \|\nabla u\|^{1-\theta_2}_{L^\infty(J,L^2)},
		\]
		provided that $\frac{1}{q}= \frac{\theta_2}{2} + \frac{(1-\theta_2)(N-2)}{2N}$. We thus obtain 
		\[
		\|u\|^\alpha_{L^{\frac{2\alpha(2+\eps)}{\eps}}(J, L^n)} \lesssim \|u\|^{\frac{\eps(N+1)}{2(2+\eps)}}_{L^{N+1}(J,L^{\frac{2(N+1)}{N-1}})} \|u\|^{a_1(\eps)}_{L^\infty(J,L^2)} \|\nabla u\|^{b_1(\eps)}_{L^\infty(J,L^2)},
		\]
		where 
		\begin{align*}
		a_1(\eps) &= \alpha(1-\theta_1)\theta_2 = \frac{N+2}{2}-\frac{N}{\gamma} - \frac{2(N-2)(\alpha+1) + \eps \left(N+1+(N-2)\alpha\right) }{2(2+\eps)}, \\
		b_1(\eps) &= \alpha(1-\theta_1) (1-\theta_2) = \frac{N}{\gamma}-\frac{N+2}{2} + \frac{2(N\alpha+N-2) + N\alpha\eps}{2(2+\eps)}.
		\end{align*}
		In order to perform the above estimates, we need $a_1(\eps)>0$ and $b_1(\eps)>0$. Since the functions $\eps \mapsto a_1(\eps)$ and $\eps \mapsto b_1(\eps)$ are decreasing, it suffices to show their limits as $\eps \rightarrow 0$ are positive. We have that
		\[
		\lim_{\eps \rightarrow 0} a_1(\eps) = \frac{N+2}{2} - \frac{N}{\gamma} -\frac{(N-2)(\alpha+1)}{2}, \quad \lim_{\eps \rightarrow 0} b_1(\eps) = \frac{N}{\gamma} - \frac{N+2}{2} + \frac{N\alpha+N-2}{2}.
		\]
		On $B$, we need $\frac{N}{\gamma} >b$. Set $\frac{N}{\gamma} = b+\eta$ for some $\eta>0$ to be chosen shortly. We have that 
		\[
		\lim_{\eps \rightarrow 0} a_1(\eps) = \frac{4-2b-(N-2)\alpha}{2} -\eta, \quad \lim_{\eps \rightarrow 0} b_1(\eps) = \frac{N\alpha - 4+2b}{2} + \eta.
		\]
		Since $\frac{4-2b}{N}<\alpha<\frac{4-2b}{N-2}$, we can choose $0<\eta<\frac{4-2b-(N-2)\alpha}{2}$ so that these two limits are positive. Similarly, on $B^c$, we write $\frac{N}{\gamma} =b -\eta$ for some $\eta>0$ to be chosen later. We see that
		\[
		\lim_{\eps \rightarrow 0} a_1(\eps) = \frac{4-2b-(N-2)\alpha}{2} + \eta, \quad \lim_{\eps \rightarrow 0} b_1(\eps) = \frac{N\alpha - 4+2b}{2} - \eta.
		\]
		Thus by choosing $0<\eta<\frac{N\alpha-4+2b}{2}$, the two limits are positive. 
		
		Let us now show the second estimate. We again write
		\[
		\||x|^{-b-1} |u|^\alpha u\|_{L^2(J, L^{\frac{2N}{N+2}})} \leq \||x|^{-b-1} |u|^\alpha u\|_{L^2(J, L^{\frac{2N}{N+2}}(B))} + \||x|^{-b-1} |u|^\alpha u\|_{L^2(J, L^{\frac{2N}{N+2}}(B^c))}.
		\]
		Let us consider two cases $N\geq 4$ and $N=3$. 
		
		\underline{When $N\geq 4$}, we use H\"older's inequality and Sobolev embedding to have 
		\begin{align*}
		\||x|^{-b-1} |u|^\alpha u\|_{L^2(J, L^{\frac{2N}{N+2}}(B))} &\leq \||x|^{-b-1}\|_{L^\gamma(B)} \||u|^\alpha u\|_{L^2(J, L^m)} \\
		&\lesssim \|u\|_{L^{2+\eps}(J, L^{\frac{2N(2+\eps)}{(N-2)(2+\eps)-4}})} \|u\|^\alpha_{L^{\frac{2\alpha(2+\eps)}{\eps}}(J,L^n)} \\
		&\lesssim \|\nabla u\|_{L^{2+\eps}(J, L^{\frac{2N(2+\eps)}{N(2+\eps)-4}})} \|u\|^\alpha_{L^{\frac{2\alpha(2+\eps)}{\eps}}(J,L^n)},
		\end{align*}
		provided that $\gamma, m, n\geq 1$ satisfy
		\[
		\frac{N}{\gamma}>b+1, \quad \frac{N+2}{2N}=\frac{1}{\gamma}+\frac{1}{m}, \quad \frac{1}{m} = \frac{(N-2)(2+\eps)-4}{2N(2+\eps)} + \frac{\alpha}{n}.
		\]
		We estimate similarly for the term involving $B^c$ provided that the first condition is replaced by $\frac{N}{\gamma}< b+1$. Estimating as above, we get
		\[
		\|u\|^\alpha_{L^{\frac{2\alpha(2+\eps)}{\eps}}(J,L^n)} \lesssim \|u\|^{\frac{\eps(N+1)}{2(2+\eps)}}_{L^{N+1}(J, L^{\frac{2(N+1)}{N-1}})} \|u\|^{a_2(\eps)}_{L^\infty(J, L^2)} \|\nabla u\|^{b_2(\eps)}_{L^\infty(J,L^2)},
		\]
		where
		\begin{align*}
		a_2(\eps) &= \frac{N+2}{2}-\frac{N}{\gamma}+1 - \frac{2(N-2)(\alpha+1) + \eps \left( N+1+(N-2)\alpha\right)}{2(2+\eps)}, \\
		b_2(\eps) &= \frac{N}{\gamma} - \frac{N+2}{2} -1 + \frac{2(N\alpha+N-2) +N\alpha\eps}{2(2+\eps)}.
		\end{align*}
		Since $\eps \mapsto a_2(\eps)$ and $\eps \mapsto b_2(\eps)$ are decreasing, it remains to show 
		\[
		\lim_{\eps \rightarrow 0} a_2(\eps) = \frac{N+2}{2} - \frac{N}{\gamma} +1  -\frac{(N-2)(\alpha+1)}{2} >0, \quad \lim_{\eps \rightarrow 0} b_2(\eps) = \frac{N}{\gamma} - \frac{N+2}{2} -1 + \frac{N\alpha+N-2}{2}>0.
		\]
		On $B$, we take $\frac{N}{\gamma}=b+1+\eta$ for some $0<\eta<\frac{4-2b-(N-2)\alpha}{2}$. It is easy to see that these two limits are positive. Similarly, on $B^c$, we can take $\frac{N}{\gamma}=b+1-\eta$ with some $0<\eta<\frac{N\alpha-4+2b}{2}$ so that the two limits are positive. 
		
		\underline{When $N=3$}, we note that the above argument does not hold since $\frac{2N(2+\eps)}{(N-2)(2+\eps)-4}$ is negative for $\eps>0$ small. We estimate
		\begin{align*}
		\||x|^{-b-1} |u|^\alpha u\|_{L^2(J, L^{\frac{6}{5}}(B))} &\leq \||x|^{-b-1}\|_{L^\gamma(B)} \||u|^\alpha u\|_{L^2(J,L^m)} \\
		&\leq \|u\|_{L^{2+\eps}(J,L^r)} \|u\|^\alpha_{L^{\frac{2\alpha(2+\eps)}{\eps}}(J,L^n)} \\
		&\lesssim \|\scal{\nabla} u\|_{L^{2+\eps}(J,L^{\frac{6(2+\eps)}{3(2+\eps)-4}})} \|u\|^\alpha_{L^{\frac{2\alpha(2+\eps)}{\eps}}(J,L^n)},
		\end{align*}
		provided that $\gamma, m, r, n\geq 1$ satisfy
		\[
		\frac{3}{\gamma} > b+1, \quad \frac{5}{6}=\frac{1}{\gamma} + \frac{1}{m}, \quad \frac{1}{m} = \frac{1}{r}+\frac{\alpha}{n}, \quad \frac{6(2+\eps)}{3(2+\eps)-4} < r<\infty. 
		\]
		Here the last condition ensures the inhomogeneous Sobolev embedding. The same estimates hold on $B^c$ provided that the condition $\frac{3}{\gamma}>b+1$ is replaced by $\frac{3}{\gamma}<b+1$. We can rewrite the last condition as $\frac{1}{r}=\frac{(2+3\eps)\tau}{12+6\eps}$ for some $\tau \in (0,1)$. We estimate as above to get
		\[
		\|u\|^\alpha_{L^{\frac{2\alpha(2+\eps)}{\eps}}(J, L^n)} \lesssim \|u\|^{\frac{2\eps}{2+\eps}}_{L^4(J, L^4)} \|u\|^{a_2(\eps)}_{L^\infty(J,L^2)} \|\nabla u\|^{a_2(\eps)}_{L^\infty(J,L^2)},
		\]
		where
		\begin{align*}
		a_2(\eps) &= \frac{5}{2}-\frac{3}{\gamma} - \frac{2(\alpha+\tau) +\eps\left(3\tau + 3 - (2-\alpha)\right)}{2(2+\eps)}, \\
		b_2(\eps) &= \frac{3}{\gamma} -\frac{5}{2} + \frac{2(3\alpha+\tau) + \eps\left(3\tau +3 - 3(2-\alpha)\right)}{2(2+\eps)}.
		\end{align*}
		It is not hard to check that $\eps \mapsto a_2(\eps)$ and $\eps \mapsto b_2(\eps)$ are decreasing. On the other hand,
		\[
		\lim_{\eps \rightarrow 0} a_2(\eps) = \frac{5}{2}-\frac{3}{\gamma} - \frac{\alpha +\tau}{2}, \quad \lim_{\eps \rightarrow 0} b_2(\eps) = \frac{3}{\gamma} - \frac{5}{2} + \frac{3\alpha+\tau}{2}.
		\]
		Note that the limit $\lim_{\eps \rightarrow 0} a_2(\eps)$ attains its maximum value as $\tau \rightarrow 0$. We thus need to choose $\tau$ close to 0. 
		
		On $B$, we take $\frac{3}{\gamma}=1+b +\eta$ for some $\eta>0$ to be chosen shortly. We see that
		\[
		\lim_{\eps \rightarrow 0} a_2(\eps) = \frac{3-2b-\tau - \alpha}{2} -\eta, \quad \lim_{\eps \rightarrow 0} b_2(\eps) = \frac{3\alpha - 3 +2b+\tau}{2} +\eta.
		\]
		Since $\frac{4-2b}{3} <\alpha$, the second limit is positive for any $\tau \in (0,1)$. By choosing $0<\tau<3-2b-\alpha$ and $0<\eta<\frac{3-2b -\alpha-\tau}{2}$, the first limit is positive provided that $\alpha <3-2b$. This leads to the restriction 
		\[
		\frac{4-2b}{3}<\alpha<3-2b, \quad 0<b<\frac{5}{4}.
		\]
		On $B^c$, we take $\frac{3}{\gamma}=1+b-\eta$ for some $\eta>0$ to be chosen later. By choosing $0<\tau<3-2b-\alpha$ and $0<\eta<\frac{3\alpha-3+2b+\tau}{2}$, the two limits are positive. Taking $\eps>0$ sufficiently small, we prove the result.
	\end{proof}
	
	\noindent \textbf{Proof of Theorem $\ref{theo-ene-sca-INLS}$.}
	We first show that the global Morawetz bound \eqref{glo-bou} implies the global Strichartz bound
	\begin{align} \label{glo-str-bou}
	\|\scal{\nabla}u\|_{S(L^2,\R)} \leq C(E,M)<\infty.
	\end{align}
	To see this, we decompose $\R$ into a finite number of disjoint intervals $J_l = [t_l, t_{l+1}], l = 1, \cdots, L$ so that
	\[
	\|u\|_{L^{N+1}(J_l, L^{\frac{2(N+1)}{N-1}})} \leq \delta, \quad l =1, \cdots, L
	\]
	for some small constant $\delta>0$ to be chosen later. By Strichartz estimates, we have that
	\begin{align*}
	\|\scal{\nabla}u\|_{S(L^2,J_l)} &\lesssim \|\scal{\nabla} u(t_l)\|_{L^2} + \|\scal{\nabla}(|x|^{-b} |u|^\alpha u)\|_{L^2(J_l, L^{\frac{2N}{N+2}})} \\
	&\lesssim \|\scal{\nabla} u(t_l)\|_{L^2} + \sum_{k=0}^1\||x|^{-b} |\nabla|^k(|u|^\alpha u)\|_{L^2(J_l, L^{\frac{2N}{N+2}})} + \||x|^{-b-1} |u|^\alpha u\|_{L^2(J_l,L^{\frac{2N}{N+2}})}.
	\end{align*}
	We learn from Lemma $\ref{lem-non-est-defo}$ that for $\eps>0$ small enough, there exist positive numbers $a_1(\eps), b_1(\eps)$, $a_2(\eps)$ and $b_2(\eps)$ such that
	\begin{align*}
	\|\scal{\nabla} u\|_{S(L^2,J_l)} &\lesssim \|\scal{\nabla} u(t_l)\|_{L^2} + \|\scal{\nabla} u\|_{S(L^2,J_l)} \|u\|^{\frac{\eps(N+1)}{2(2+\eps)}}_{L^{N+1}(J_l, L^{\frac{2N}{N+2}})} \|u\|_{L^\infty(J_l,L^2)}^{a_1(\eps)} \|\nabla u\|_{L^\infty(J_l,L^2)}^{b_1(\eps)} \\
	&\mathrel{\phantom{\lesssim \|\scal{\nabla} u(t_l)\|_{L^2}}} +\|\scal{\nabla}u\|_{S(L^2,J_l)} \|u\|^{\frac{\eps(N+1)}{2(2+\eps)}}_{L^{N+1}(J_l, L^{\frac{2N}{N+2}})} \|u\|_{L^\infty(J_l,L^2)}^{a_2(\eps)} \|\nabla u\|_{L^\infty(J_l,L^2)}^{b_2(\eps)}.
	\end{align*}
	This shows that 
	\[
	\|\scal{\nabla} u\|_{S(L^2,J_l)} \lesssim \|\scal{\nabla} u(t_l)\|_{L^2} + \|\scal{\nabla} u\|_{S(L^2,J_l)} \delta^{\frac{\eps(N+1)}{2(2+\eps)}} C(E,M).
	\]
	Taking $\delta>0$ small enough, we obtain
	\[
	\|\scal{\nabla} u\|_{S(L^2,J_l)} \lesssim \|\scal{\nabla} u(t_l)\|_{L^2} \leq C(E,M).
	\]
	By summing over a finite number intervals $J_l, l=1, \cdots, L$, we prove \eqref{glo-str-bou}. 
	
	We now show the scattering property of global solutions. By the time reversal symmetry, it suffices to consider positive times. By Duhamel formula, we have that
	\[
	e^{-it\Delta} u(t) = u_0 - i \int_0^t e^{-is\Delta} |x|^{-b} |u|^\alpha u ds.
	\]
	Now let $t_2>t_1>0$. By Strichartz estimates,
	\begin{align*}
	\|e^{-it_2\Delta} u(t_2) - e^{-it_1\Delta} &u(t_1)\|_{H^1} \\
	&\lesssim \left\| -i \int_{t_1}^{t_2} e^{-is\Delta} |x|^{-b} |u|^\alpha u ds \right\|_{H^1} \\
	&\lesssim \|\scal{\nabla} (|x|^{-b} |u|^\alpha u)\|_{L^2([t_1,t_2],L^{\frac{2N}{N+2}})} \\
	&\lesssim \sum_{k=0}^1 \||x|^{-b} |\nabla|^k(|u|^\alpha u)\|_{L^2([t_1,t_2],L^{\frac{2N}{N+2}})} + \||x|^{-b-1} |u|^\alpha u\|_{L^2([t_1,t_2],L^{\frac{2N}{N+2}})} \\
	&\lesssim \|\scal{\nabla} u\|_{S(L^2,[t_1,t_2])} \|u\|^{\frac{\eps(N+1)}{2(2+\eps)}}_{L^{N+1}([t_1,t_2], L^{\frac{2N}{N+2}})} \|u\|_{L^\infty([t_1,t_2], L^2)}^{a_1(\eps)} \|\nabla u\|_{L^\infty([t_1,t_2],L^2)}^{b_1(\eps)} \\
	&\mathrel{\phantom{\lesssim}} + \|\scal{\nabla} u\|_{S(L^2,[t_1,t_2])} \|u\|^{\frac{\eps(N+1)}{2(2+\eps)}}_{L^{N+1}([t_1,t_2], L^{\frac{2N}{N+2}})} \|u\|_{L^\infty([t_1,t_2], L^2)}^{a_2(\eps)} \|\nabla u\|_{L^\infty([t_1,t_2],L^2)}^{b_2(\eps)}.
	\end{align*}
	Thanks to \eqref{glo-bou}, \eqref{glo-str-bou} and the conservation of mass and energy, we see that 
	\[
	\|e^{-it_2\Delta} u(t_2)- e^{-it_1\Delta} u(t_1)\|_{H^1} \rightarrow 0 \text{ as } t_1, t_2 \rightarrow +\infty.
	\]
	Hence the limit
	\[
	u_0^+: = \lim_{t\rightarrow +\infty} e^{-it\Delta} u(t) = u_0 - i \int_0^{+\infty} e^{-is\Delta} |x|^{-b} |u|^\alpha u ds
	\]
	exists in $H^1$. Moreover, 
	\[
	u(t) - e^{it\Delta} u_0^+ = i\int_t^{+\infty} e^{i(t-s)\Delta} |x|^{-b} |u|^\alpha u ds.
	\]
	Estimating as above, we show as well that
	\[
	\|u(t) - e^{it\Delta} u_0^+ \|_{H^1} \rightarrow 0 \text{ as } t\rightarrow +\infty.
	\]
	The proof is complete.
	\hfill $\Box$

\end{document}